\documentclass[12pt, twoside]{article}

\newsavebox{\savepar}

\usepackage{amsmath}
\usepackage{amsfonts}
\usepackage{euscript}
\usepackage{oldgerm}
\usepackage{amsthm}
\usepackage{rotating}
\usepackage{mathrsfs}
\usepackage{hyperref}
\usepackage{amssymb}
\usepackage{epsfig}
\usepackage{dsfont}
\usepackage{subfig}

\pagestyle{myheadings}

\usepackage[top=1.5in,bottom=1.5in,left=1.2in,right=1.2in]{geometry}

\makeindex

\newtheorem{theorem}{Theorem}[section]
\theoremstyle{definition}

\theoremstyle{remark}

\newtheorem{lemma}[theorem]{\bf Lemma}
\newtheorem{proposition}[theorem]{\bf Proposition}

\theoremstyle{definition}

\theoremstyle{remark}


%



\numberwithin{equation}{section}



\begin{document}

\newcommand{\norm}[1]{\left\lVert #1\right\rVert}
\newcommand{\namelistlabel}[1]{\mbox{#1}\hfil}
\newenvironment{namelist}[1]{%
\begin{list}{}
{
\let\makelabel\namelistlabel
\settowidth{\labelwidth}{#1}
\setlength{\leftmargin}{1.1\labelwidth}
}
}{%
\end{list}}

\newcommand{\K}{\mathcal K}
\newcommand{\inp}[2]{\left\langle {#1} ~ ,\,{#2} \right\rangle}

\newcommand{\vspan}[1]{{{\rm\,span}\{ #1 \}}}
\newcommand{\R} {{\mathbb{R}}}

\newcommand{\C} {{\mathbb{C}}}
\newcommand{\N} {{\mathbb{N}}}
\newcommand{\Q} {{\mathbb{Q}}}
\newcommand{\LL} {{\mathbb{L}}}
\newcommand{\Z} {{\mathbb{Z}}}

\title{Richardson extrapolation for the iterated Galerkin solution of Urysohn integral equations with Green's kernels}
\author{Gobinda RAKSHIT\footnote{School of Mathematical Sciences, UM-DAE Centre for Excellence in Basic Sciences, University of Mumbai Campus, Kalina, Mumbai 400 098, India, email : \textbf{gobinda.rakshit@cbs.ac.in}}, Akshay S. RANE\footnote{Department of Mathematics, Institute of Chemical Technology, Nathalal Parekh Marg, Matunga, Mumbai 400 019, India, email :  \textbf{as.rane@ictmumbai.edu.in}} \: and Kshitij PATIL\footnote{Department of Mathematics, Institute of Chemical Technology, Nathalal Parekh Marg, Matunga, Mumbai 400 019, India, email :  \textbf{kshitijanandpatil@gmail.com}	}
\hspace {1mm}
}
\date{ }
\maketitle

\begin{abstract}
We consider a Urysohn integral operator $\K$ with kernel of the type of Green's function. For $r \geq 1$, a space of piecewise polynomials of degree $\leq r-1 $ with respect to a uniform partition is chosen to be the approximating space and the projection is chosen to be the orthogonal projection. Iterated Galerkin method is applied to the integral equation $x - \K(x) = f$. It is known that the order of convergence of the iterated Galerkin solution is $r+2$ and, at the above partition points it is $2r$.  We obtain an asymptotic expansion of the iterated Galerkin solution at the partition points of the above Urysohn integral equation. Richardson extrapolation is used to improve the order of convergence. A numerical example is considered to illustrate our theoretical results.
\end{abstract}

\noindent
Key Words : Urysohn integral operator, Green's kernel, Galerkin method, Richardson extrapolation.

\smallskip
\noindent
AMS  subject classification : 45G10, 65B05, 65J15, 65R20

\thispagestyle{empty}

\newpage


\setcounter{equation}{0}
\section{Introduction}

Let $\mathcal{X} = L^\infty [0, 1]$ and 
consider the following  Urysohn integral operator 
\begin{align}\label{eq:1.1}
\mathcal{K} (x)(s)  = \int_0^1 \kappa \left(s, t, x (t)\right)  d t, \;\;\; s \in [0, 1], \; x \in \mathcal{X},
\end{align}
where $\kappa (s, t, u)$ is a real valued continuous function
defined on 
$ \Omega =[0, 1]\times[0, 1] \times \R.$
Then  $\mathcal{K} $ is a compact operator from $L^\infty [0, 1]$ to $C [0, 1].$ 
Consider the Urysohn integral equation
\begin{align}\label{eq:1.2}
	x(s) - \int_0^1 \kappa \left(s, t, x (t)\right)  d t = f(s), \;\;\; s \in [0, 1],
\end{align}
where $f \in \mathcal{X}$ is given and $x$ is the unknown to be determined. We assume that $\varphi$ is an isolated solution of the above equation and consider its numerical approximations.

We are interested in approximate solution which converges to $\varphi$ uniformly. We consider some projection methods associated with a sequence of orthogonal projections converging to the Identity operator point-wise.

In this paper, we consider the case when the kernel $\kappa$ of the integral operator $\K$, is of the type of  Green's function in its domain. We allow the partial derivatives of the kernel $\kappa$ to have jump discontinuities along the diagonal $s = t$. For $ r \geq 1,$ let $ \mathscr{X}_n$ be a space of piecewise polynomials of degree $ \leq r-1 $ with respect to a uniform partition of $ [0, 1]$ with $n$ subintervals each of length ${h = \frac {1} {n} }.$ 
Let $\pi_n$ be the restriction to $L^\infty [0, 1]$ of the orthogonal projection from $L^2 [0, 1]$ onto  $\mathscr{X}_n.$ Galerkin method is a classical projection method for the approximate solution of an integral equation. In this method,
(\ref{eq:1.2}) is approximated by
\begin{equation}\label{eq:Gal}
x_n^G - \pi_n \mathcal {K} (x_n^G) = \pi_n f. 
\end{equation}
The above projection method has been studied extensively in the research literature. 
See Krasnoselsii \cite{Kra}, Krasnoselskii, Vainikko et al \cite{KraV}
and Krasnoselskii and Zabreiko \cite{KraZ} for details. 

\noindent
The iterated Galerkin solution is defined by
$$x_n^S = \K(x_n^G) + f. $$ Note that $$ x_n^G = \pi_n x_n^S $$ and then
the iterated Galerkin solution satisfies the following equation:
\begin{equation}\label{eq:It_Gal}
x_n^S - \K(\pi_{n} x_n^S) = f.
\end{equation}

\noindent
From Atkinson-Potra \cite{Atk-Pot}, we quote the following orders of convergence:

\noindent 
If $r = 1,$ then
\begin{equation}\label{eq1.5}
\| x_n^G - \varphi \|_\infty = O ( h ), \;\;\; 
\| x_n^S  - \varphi \|_\infty = O ( h^{ 2  } ),
\end{equation} 
whereas if $r \geq 2,$ then
\begin{equation}\label{eq:1.5}
\norm{x_n^G - \varphi}_\infty = O\left( h^{r} \right) ~ , \quad \norm{x_n^S - \varphi}_\infty = O\left( h^{r+2}\right). 
\end{equation}

Asymptotic error analysis and extrapolation methods are classical topics in numerical analysis. Richardson extrapolation is the popular one. In Ford et al. \cite{Ford}, Hammerstein integral equation with Green's function type of kernel is considered. Composite trapezoidal quadrature method is used to approximate the integral operator, and then an asymptotic error expansion is obtained for the approximate solution at the node points. Richardson extrapolation is applied to improve the orders of convergence. In Kulkarni-Rane \cite{Rpk-Aks}, the authors have defined the Nystr\"om operator based on the
composite midpoint and the composite modified Simpson rules to approximate the integral operator of a Hammerstein integral equation with Green's function type kernel. Asymptotic expansions for the approximate solution at the node points as well as at the partition points, are obtained and Richardson extrapolation is used to obtain approximate solutions with higher orders of convergence. Hammerstein integral equation is a special case of Urysohn integral equation. The case when the kernel of the Urysohn integral equation is sufficiently smooth, asymptotic error analysis are investigated for various projection methods in Kulkarni-Nidhin \cite{RPK-NTJ}. In the case of a linear integral equation of the second kind with smooth kernel, asymptotic series expansion for the iterated Galerkin solution is proved by McLean \cite{McLean}. The case of asymptotic expansion for approximate solution of integral equations with Green's kernel, at the partition points in the case of Nystr\"{o}m method with midpoint rule, modified Simpsons rule and iterated collocation method is treated in Kulkarni-Rane \cite {Rpk-Aks1}.

In Rakshit-Rane \cite{GR-AR}, we considered a Fredholm integral equation with kernel of the type of Green's function and then asymptotic error analysis is investigated for the iterated Galerkin solution at the partition points. Richardson extrapolation is applied to obtain an approximate solution with higher rate of convergence. 

In this paper we shall analyze asymptotic expansion for the iterated Galerkin solution of Urysohn integral equation with Green's function type kernel. We will use Richardson extrapolation to improve the order of convergence.

The paper is organized as follows. In Section 2, notation is set
and some preliminary results are proved for later use. In Section 3,
asymptotic error analysis for the iterated Galerkin solution at the partition  points are investigated. Numerical illustration is given in Section 4.

\setcounter{equation}{0}
\section{Preliminaries}

In this section we describe the Urysohn integral operator with Green's function type kernel, its Fr\'echet derivatives and related preliminary results. We introduce the following notations.

For an integer $\alpha \geq 0$, let $ C^{\alpha}[0, 1]$ denotes the space of all real valued $\alpha$-times continuously differentiable functions on $[0, 1]$ with the following norm. 
$$ \norm{x}_{\alpha, \infty} = \max_{0 \leq j \leq \alpha} \norm{x^{(j)}}_\infty,$$ where $x^{(j)}$ is the $j^{\text{th}}$ derivative of the function $x$ and 
$$\norm{x^{(j)}}_\infty = \sup_{0 \leq t \leq 1} \left| x^{(j)}(t)\right|.$$
Define
$$ \norm{\kappa}_{\alpha, \infty} = \max_{0 \leq i+j+k \leq \alpha} \norm{D^{(i, j, k)}\kappa(s, t, u)}_\infty,$$
where
$$D^{(i, j, k)}\kappa(s, t, u) = \frac{\partial^{i+j+k}  \kappa}{\partial s^i \partial t^j \partial u^k}(s, t, u).$$


\subsection{Properties of the kernel (Green's function type)}\label{subsection:2.1}

Let $r \geq 1$   be an integer and assume that the kernel $\kappa$ has the following properties.

\begin{enumerate}
	\item For $i = 1, 2, 3, 4$, the functions $\kappa, \displaystyle{\frac { \partial^i \kappa} {\partial u^i} \in C ( \Omega),}$
	where $C ( \Omega)$ denotes the space of all real valued continuous function on $\Omega = [0, 1] \times [0, 1] \times \R$.

	\item Let
	$$ \Omega_1 = \{ (s, t, u): 0 \leq t \leq s \leq 1, \; u \in \R \},\;\;\;
	\Omega_2 = \{ (s, t, u): 0 \leq s \leq t \leq 1, \; u \in \R \}.$$
	There are two functions $\kappa_j \in C^{r} ( \Omega_j ), j = 1, 2, $ such that
	$$
	\kappa (s,t, u) = \left\{ {\begin{array}{ll}
			\kappa_1 (s, t, u), \;\;\; (s, t, u) \in \Omega_1,   \\
			\kappa_2 (s, t, u), \;\;\; (s, t, u) \in \Omega_2.
	\end{array}}\right.
	$$
	
	\item Denote
	$\displaystyle{ \ell (s, t, u) = \frac {\partial \kappa  } { \partial u}( s, t, u)} $ and $\displaystyle{ q (s, t, u) = \frac {\partial^2 \kappa } { \partial u^2}( s, t, u)}, $
	 for all $ (s, t, u) \in \Omega.$ The partial derivatives of $\ell (s, t, u)$ and $q (s, t, u)$ with respect to $s$ and $t$ have jump discontinuities on $s = t$.
		
	\item There are functions $\ell_j, q_j \in C^{r}  ( \Omega_j ), j = 1, 2, $ with
	$$
	\ell (s,t, u) = \left\{ {\begin{array}{ll}
			\ell_1 (s, t, u), \;\;\; (s, t, u) \in \Omega_1,   \\
			\ell_2 (s, t, u), \;\;\; (s, t, u) \in \Omega_2,
	\end{array}}\right.
	$$
	$$
	q (s,t, u) = \left\{ {\begin{array}{ll}
			q_1 (s, t, u), \;\;\; (s, t, u) \in \Omega_1,   \\
			q_2 (s, t, u), \;\;\; (s, t, u) \in \Omega_2.
	\end{array}}\right.
	$$
	
\end {enumerate}
Following Atkinson-Potra \cite{Atk-Pot}, if the kernel $\kappa$ satisfies the above conditions, then we say that $\kappa$ is of class $\mathscr{G}_4(r, 0)$.

	Under the above assumptions, the operator $\mathcal {K}$ is four times Fr\'echet differentiable and its Fr\'echet derivatives at $x \in \mathcal{X}$ are given by 
	$$ \mathcal {K}'(x) v_1 (s) = \int_0^1 \frac {\partial \kappa } {\partial u} \left(s,t, x(t)\right) \:
	v_1(t) \: dt $$
	and 
	\begin{equation}\label{eq:2.1}
	\mathcal {K}^{(i)}(x) (v_1,\ldots, v_i) (s) = \int_0^1 \frac {\partial^i \kappa } {\partial u^i} \left(s,t,x(t)\right) \:
	v_1(t) \cdots v_i(t) \: dt, \qquad i = 2, 3, 4,
	\end{equation}
	where 
	\begin{align*}
	\frac {\partial^i \kappa } {\partial u^i} \left(s,t,x(t)\right) = \frac {\partial^i \kappa } {\partial u^i} \left(s,t,u\right)|_{u = x(t)}, \quad i = 1, 2, 3, 4
	\end{align*} and $v_1, v_2, v_3, v_4 \in \mathcal{X}$. Note that $\mathcal {K}' (x) : \mathcal{X} \rightarrow \mathcal{X}$ is linear and $ \mathcal {K}^{(i)}(x) : \mathcal{X}^i \rightarrow \mathcal{X} $ are multi-linear operators, where $\mathcal{X}^i$ is the cartesian product of $i$ copies of $\mathcal{X}$. See Rall \cite{Rall}.
	 We define 
	\begin{equation}\nonumber
	\norm{\mathcal {K}^{(i)}(x) } \:=\: \sup_{ \stackrel {\norm{v_j}_\infty \leq 1} {j = 1, \ldots, i}} \norm{\mathcal {K}^{(i)}(x) (v_1, \ldots, v_i)}_\infty, \qquad i = 1, 2, 3, 4.
	\end{equation}
	It follows that
	\begin{eqnarray}\nonumber
	\norm{\mathcal {K}^{(i)}(x) } &\leq& \sup_{0 \leq s, t \leq 1} \left| \frac {\partial^i \kappa } {\partial u^i} \left(s, t, x(t)\right) \right|, \qquad i = 1, 2, 3, 4.
	\end{eqnarray}
\noindent
We rewrite the equation \eqref{eq:1.2} as 
\begin{align*}
	x - \mathcal{K} (x) = f, \quad x \in \mathcal{X}.
\end{align*}
Let
\begin{equation}\label{new_op}
	\mathcal{T}(x) = \mathcal{K}(x) + f, \quad x \in \mathcal{X}.
\end{equation}
Assume that $\varphi$ is a fixed point of $\mathcal{T}$. Since $\K$ is compact, $\mathcal {K}' (\varphi)$ is a compact linear operator. See Krasnoselskii \cite{Kra}. Assume that $1$ is not an eigenvalue of $\mathcal {K}' (\varphi).$ Then, $\varphi$ is an isolated solution of \eqref{eq:1.2}. Let $ f \in C^{\alpha} [0, 1]$, then by the Corollary 3.2 of Atkinson-Potra \cite{Atk-Pot}, it follows that $\varphi \in C^{\alpha} [0, 1].$

	\subsection{Approximating Space and Projection operator}
	Let $n \in \mathbb{N}$ and consider the following uniform partition of $[0, 1]:$
	\begin{equation}\label{eq:2.2}
	\Delta: 0  <  \frac{1} {n}  < \cdots <   \frac{n-1} {n}   <  1.
	\end{equation}
	Define
	\begin{equation}\label{partition_points}
	t_j = \frac {j} {n}, \;\;\; j = 0, \ldots, n.
	\end{equation}
	Let 
	\begin{equation}\nonumber
	\Delta_j = [t_{j-1}, t_j]   \;\;\; \mbox {and} \;\;\; h = t_{j} - t_{j-1} = \frac {1} {n}, \;\;\; j = 1, \ldots, n.
	\end{equation}
	Consider a finite dimensional approximating space as
	\begin{equation}\nonumber
	\mathscr{X}_{n} = \left\{ g \in L^{\infty}[0, 1] : g \text{ is a polynomial of degree } \leq r-1 \text{ on }\Delta_j , ~j=1, 2, \dots, n \right\}.
	\end{equation}
	As no continuity conditions  are imposed at the partition points, the dimension of  $\mathscr{X}_n$ is $ n r$ and
	$ \displaystyle {\mathscr{X}_{n} \subset L^{\infty}[0,1]. }$

Let $\pi_n$ be the restriction to $L^\infty [0, 1]$ of the orthogonal projection from $L^2 [0, 1]$ onto  $\mathscr{X}_n$, which converges to the Identity operator pointwise. Then
\begin{equation}\label{eq:2.3}
\sup_{n \geq 1} \norm{\pi_n}_{L^{\infty}[0, 1] \rightarrow L^{\infty}[0, 1]} < \infty.
\end{equation} 
If $x \in C^{\alpha}[0, 1]$, it is well-known that
\begin{eqnarray}\label{eq2.4}
\norm{(I - \pi_{n})x}_\infty & \leq & C_1 \|x^{(\beta)} \|_\infty  h^{\beta}, 
\end{eqnarray}
where $\beta = \min\left\{ \alpha, r \right\}$ and $C_1$ is a constant independent of $h$. See Atkinson \cite{Atk}, Chatelin-Lebbar \cite{CL}.
Denote $$ \pi_{n,j} x = \pi_{n} x |_{\Delta_j}, \quad j = 1, 2, \ldots, n.$$
For $x \in C^{\alpha} (\Delta_j),$ we have 
\begin{eqnarray}\label{eq:2.4}
\|(I - \pi_{n,j})x \|_{\Delta_j, \infty} & \leq & C_2 \|x^{(\beta)} \|_{\Delta_j, \infty}  h^{\beta}, 
\end{eqnarray}
where $\beta = \min\left\{ \alpha, r \right\}$ and $C_2$ is a constant independent of $h$. See Atkinson-Potra \cite[Corollary 4.3]{Atk-Pot}.

\subsection{Asymptotic Expansions and the higher order terms}

Let $\varphi \in C^{2r+2}[0, 1]$. For $\delta > 0,$ let 
$$\mathcal{B}(\varphi, \delta) = \left\{ x \in \mathcal{X} : \norm{x - \varphi}_\infty \leq \delta\right\}$$ denote the closed $\delta$-neighbourhood of $\varphi$. Without loss of generality, we assume that the Galerkin solution $x_n^G$ and the iterated Galerkin solution $x_n^S$ belong to the above neighbourhood.

\noindent
Denote
$$\ell_{*}(s, t) = \frac {\partial \kappa } {\partial u} \left(s,t, \varphi(t)\right), \quad s, t \in [0, 1],$$
$$q_{*}(s, t) = \frac {\partial^2 \kappa } {\partial u^2} \left(s,t, \varphi(t)\right), \quad s, t \in [0, 1].$$
It follows that
\begin{equation}\label{eq:2.5}
\mathcal {K}'(\varphi) v (s) = \int_0^1 \ell_{*}(s, t) \:
v(t) \: dt, \qquad v \in \mathcal{X}, \: s \in [0, 1],
\end{equation}
$$ \K''(\varphi)(v_1, v_2)(s) =  \int_{0}^{1} q_{*}(s,t) ~ v_1(t)\: v_2(t) \: dt, \qquad v_1, v_2 \in \mathcal{X}, \: s \in [0, 1],$$
where the kernels $\ell_{*}(\cdot, \cdot), \; q_{*}(\cdot, \cdot) \in C[0, 1] \times C[0, 1]$ are of the type of Green's function as mentioned in the section \ref{subsection:2.1}. Then 
\begin{equation*}
	\norm{\K'(\varphi)} \leq \sup_{0 \leq t \leq 1} \int_{0}^{1} \left| \ell_{*}(s,t) \right| \: ds, 
\end{equation*}
\begin{equation}\label{eq:2.9}
	\norm{\K''(\varphi)} \leq \sup_{0 \leq t \leq 1} \int_{0}^{1} \left| q_{*}(s,t) \right| \: ds. 
\end{equation}
See Atkinson \cite{Atk}.

By assumption, $ I - \K'(\varphi)$ is invertible. Let
$$ \mathcal{M} = \left( I - \K'(\varphi) \right)^{-1} \K'(\varphi),$$
$$ \mathcal{M}_2 = \left( I - \K'(\varphi) \right)^{-1} \K''(\varphi),$$
$$ \mathcal{M}_3 = \left( I - \K'(\varphi) \right)^{-1} \K^{(3)}(\varphi).$$
Then $\mathcal{M}$, $\mathcal{M}_2 $ and $\mathcal{M}_3$ are respectively compact linear, bi-linear and tri-linear integral operators. See Riesz-Nagy \cite{Riesz-Nagy}. For $v \in \mathcal{X}$, let
\begin{equation*}
\mathcal{M} v (s) = \int_{0}^{1} m(s, t) \: v(t) \: dt, \quad s \in [0, 1],
\end{equation*}
Note that the kernels of $\mathcal{M}$, $\mathcal{M}_2 $ and $\mathcal{M}_3$ inherit the same smoothness properties as the kernels of $\K'(\varphi)$, $\K''(\varphi)$ and $\K^{(3)}(\varphi)$ respectively. See Atkinson-Potra [Lemma 5.1]\cite{Atk-Pot}. Hence, the kernels of the above three operators are of the type of Green's function as mentioned in section \ref{subsection:2.1}. 

We quote the following result from Rakshit-Rane \cite{GR-AR}.
\begin{equation}\label{asy_exp1}
	\mathcal{M} \varphi(t_i) = \mathcal{M} \pi_{n} \varphi(t_i) + \left(\mathcal{A}_{2r} \varphi \right)(t_i) h^{2r} + O \left( h^{2r+2} \right), \quad i = 0, 1, \ldots, n,
\end{equation}
where
\begin{multline*}
	(\mathcal{A}_{2r} \varphi)(t_i)=\bar{b}_{2r,2r} \int_0^1 m(t_i,t) (t) ~ \varphi^{(2r)}(t) ~ dt \\ ~~~~~+\sum_{p=1}^{2r-1}\bar{b}_{2r,p}  \Bigg\{
	\left[ \left( \frac{\partial}{\partial t}\right) ^{2r-p - 1}\left( m(t_i,t) \varphi^{(p)}(t)\right) \right]_{t=0}^{t=1}  \\
	- \left[ \left( \frac{\partial}{\partial t}\right) ^{2r-p - 1}\left( m(t_i,t) \varphi^{(p)}(t)\right) \right]_{t=t_i-}^{t=t_i+} \Bigg\}
\end{multline*}
with
\begin{equation}\nonumber
	\bar{b}_{2r,p}= \int_{0}^{1} \int_{0}^{1} 
	\Lambda_{r}(\sigma,\tau)\frac{(\sigma-\tau)^{p}}{p!}\frac{B_{2r-p}(\tau)}{(2r-p)!} \: d\sigma \: d\tau,
\end{equation}
$ \displaystyle \Lambda_{r}(\sigma,\tau)= \sum_{q=0}^{r-1}e_{q}(\sigma) e_{q}(\tau)$, $\left\{ e_0, e_1, e_2, \ldots \right\}$ is the sequence of orthonormal polynomials in $L^2[0, 1]$ and $B_{k}$ is the Bernoulli polynomial of degree $k$.

As in Kulkarni-Nidhin \cite[Lemma 2.4]{RPK-NTJ}, it can be shown that for all $s \in [0, 1]$,
\begin{equation}\label{asy_exp2}
	\mathcal{M}_2 (\pi_n \varphi-\varphi)^2 (s) = \mathcal{V}_1 (\varphi) (s) h^{2r} +O(h^{2r+2}) 
\end{equation}
and 
\begin{equation}\label{asy_exp3}
	 \mathcal{M}_3(\pi_n \varphi-\varphi)^3(s)= \mathcal{V}_2 (\varphi)(s) h^{3r} +O(h^{3r+1}),
\end{equation}
where $$ \mathcal{V}_1(\varphi)  = \left( \int_{0}^{1} [\chi_r(t)]^2 dt  \right) \mathcal{M}_2\left(  \varphi^{(r)} \right)^2 ,$$
$$ \mathcal{V}_2(\varphi)  = \left( \int_{0}^{1} [\chi_r(t)]^3 dt  \right) \mathcal{M}_3\left(  \varphi^{(r)} \right)^3 \text{ and }~ \mathcal{V}_2(\varphi)  = 0 \text{ for } ~ r= 1$$ with $$ \chi_r(t) = \int_{0}^{1} \Lambda_{r}(\sigma, t) \frac{(\sigma - t)^r}{r!} d\sigma,$$ are independent of $h$. In the proof of Lemma 2.4 in Kulkarni-Nidhin \cite{RPK-NTJ}, the authors used Euler-McLaurin expansion for smooth kernel. Since, the kernels of $\mathcal{M}_2$ and $ \mathcal{M}_3$ are of the type of Green's function, we use 
the extended Euler-McLaurin summation formula from Kulkarni-Rane \cite{Rpk-Aks1}. It follows that
\begin{equation}\label{eq:2.13}
	\norm{\mathcal{M}_3(\pi_n \varphi-\varphi)^3}_\infty = O\left( h^4 \right ) \quad  \text{ for }~ r=1.
\end{equation}

Let
$$ C_3 = \max \left \{ \sup_{\stackrel { 0 \leq t \leq  s \leq 1 }{|u| \leq \|\varphi \|_\infty + \delta }} 
\left | D^{(1,0, 0)} q_{1} (s, t, u)  \right |, \sup_{\stackrel { 0 \leq s \leq  t \leq 1 }{|u| \leq \|\varphi \|_\infty + \delta}} 
\left | D^{(1, 0, 0)} q_{2}  (s, t, u)  \right |   \right \},$$
$$ C_4 = \max \left \{ \sup_ { 0 \leq t \leq  s \leq 1 } 
\left | D^{(1,0)} \ell_{*,1} (s, t)  \right |, \sup_{ 0 \leq s \leq  t \leq 1 } 
\left | D^{(1, 0)} \ell_{*,2} (s, t)  \right |   \right \}.$$
We first prove the following preliminary result which is needed later on.
\begin{lemma}\label{lem:2.1}
	Let $x \in \mathcal{B}(\varphi, \delta)$. Then, for $v_1, v_2 \in \mathcal{X}$, 
	$$\norm{\left( \mathcal{K}''(x)(v_1, v_2) \right)^{'}}_\infty \leq C_3 \norm{v_1}_\infty \norm{v_2}_\infty. $$
\end{lemma}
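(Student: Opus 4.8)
The plan is to write $\K''(x)(v_1,v_2)(s)$ as an integral against the kernel $q(s,t,x(t))=\dfrac{\partial^2\kappa}{\partial u^2}(s,t,x(t))$, split the range of integration at the diagonal point $t=s$ into the two pieces on which $q$ coincides with the smooth functions $q_1$ and $q_2$, differentiate with respect to $s$ by the Leibniz rule, and then estimate using the definition of $C_3$.

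First I would recall from \eqref{eq:2.1} that
$$\K''(x)(v_1,v_2)(s) = \int_0^1 q(s,t,x(t))\,v_1(t)\,v_2(t)\,dt .$$
Since $q=q_1$ on $\Omega_1$ and $q=q_2$ on $\Omega_2$, splitting the integral at $t=s$ gives
$$\K''(x)(v_1,v_2)(s) = \int_0^s q_1(s,t,x(t))\,v_1(t)\,v_2(t)\,dt + \int_s^1 q_2(s,t,x(t))\,v_1(t)\,v_2(t)\,dt .$$
Because $q_j\in C^r(\Omega_j)$ with $r\ge 1$, each integrand is a $C^1$ function of $s$ for fixed $t$, so the Leibniz integral rule applies. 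Differentiating, the boundary terms arising from the variable limits are $q_1(s,s,x(s))\,v_1(s)\,v_2(s)$ and $-\,q_2(s,s,x(s))\,v_1(s)\,v_2(s)$; since $q=\dfrac{\partial^2\kappa}{\partial u^2}\in C(\Omega)$ is continuous across the diagonal we have $q_1(s,s,u)=q_2(s,s,u)$, so these two terms cancel. This leaves
$$\bigl(\K''(x)(v_1,v_2)\bigr)'(s) = \int_0^s D^{(1,0,0)}q_1(s,t,x(t))\,v_1(t)\,v_2(t)\,dt + \int_s^1 D^{(1,0,0)}q_2(s,t,x(t))\,v_1(t)\,v_2(t)\,dt .$$

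Next I would estimate. Since $x\in\mathcal{B}(\varphi,\delta)$ we have $|x(t)|\le\|\varphi\|_\infty+\delta$ for every $t\in[0,1]$, so by the definition of $C_3$ both $\bigl|D^{(1,0,0)}q_1(s,t,x(t))\bigr|$ and $\bigl|D^{(1,0,0)}q_2(s,t,x(t))\bigr|$ are bounded by $C_3$ on the respective triangles. Pulling $\norm{v_1}_\infty\norm{v_2}_\infty$ out of both integrals and using $\int_0^s dt+\int_s^1 dt=1$ yields $\bigl|\bigl(\K''(x)(v_1,v_2)\bigr)'(s)\bigr|\le C_3\norm{v_1}_\infty\norm{v_2}_\infty$ for all $s\in[0,1]$; taking the supremum over $s$ gives the claim.

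The only delicate points are the justification of differentiation under the integral sign (routine, from the $C^1$-dependence on $s$ guaranteed by $q_j\in C^r(\Omega_j)$) and the cancellation of the diagonal boundary terms, which is exactly where the Green's-function structure is used: $q$ itself is continuous across $s=t$, while only its $s$- and $t$-derivatives are allowed to jump there. I do not expect any genuine obstacle beyond bookkeeping of which variable each partial derivative acts on.
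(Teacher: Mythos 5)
Your proposal is correct and follows essentially the same route as the paper's own proof: split the integral at $t=s$ into the $q_1$ and $q_2$ pieces, differentiate by the Leibniz rule, cancel the diagonal boundary terms using the continuity of $q$ on $\Omega$, and bound the remaining integrals by $C_3\norm{v_1}_\infty\norm{v_2}_\infty$. No gaps.
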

\begin{proof}
	Let $s \in [0, 1]$. Then
	\begin{align*}
		\mathcal{K}''(x)(v_1, v_2)(s) &= \int_{0}^{1} q(s, t, x(t)) \:v_1(t) \: v_2(t) \: dt \\
		&= \int_{0}^{s} q_1(s, t, x(t)) \:v_1(t) \: v_2(t) \: dt + \int_{s}^{1} q_2(s, t, x(t)) \:v_1(t) \: v_2(t) \: dt.
	\end{align*}
It follows that
\begin{align*}
	\left( \mathcal{K}''(x)(v_1, v_2) \right)^{'}(s) & = \int_{0}^{s} \frac{\partial q_1}{\partial s}\left(s, t, x(t)\right) v_1(t) \: v_2(t) \: dt + q_1(s, s, x(s)) \:v_1(s) \: v_2(s) \\
	& ~~ + \int_{s}^{1} \frac{\partial q_2}{\partial s}\left(s, t, x(t)\right) v_1(t) \: v_2(t) \: dt - q_2(s, s, x(s)) \:v_1(s) \: v_2(s).
\end{align*}
Since $q$ is continuous on $\Omega$,
\begin{align*}
	\left( \mathcal{K}''(x)(v_1, v_2) \right)^{'}(s) =  \int_{0}^{s} \frac{\partial q_1}{\partial s}\left(s, t, x(t)\right) v_1(t)  v_2(t) \: dt  
	 + \int_{s}^{1} \frac{\partial q_2}{\partial s}\left(s, t, x(t)\right) v_1(t)  v_2(t) \: dt.
\end{align*}
Hence,
$$\norm{\left(\mathcal{K}''(x)(v_1, v_2) \right)^{'}}_\infty \leq C_3 \norm{v_1}_\infty \norm{v_2}_\infty.$$
This completes the proof.
\end{proof}

\noindent
Let $x \in \mathcal{B}(\varphi, \delta)$. Then by the above lemma, we obtain 
\begin{equation}\label{eq:2.10}
	\norm{(I - \pi_{n}) \left( \mathcal{K}''(x)(v_1, v_2) \right)}_\infty \leq C_1 C_3 \norm{v_1}_\infty \norm{v_2}_\infty h.
\end{equation}
Similarly, for any $v \in \mathcal{X}$, it can be shown that the function  $\mathcal{K}'(\varphi)v$ is differentiable on $[0, 1]$ and
\begin{equation}\label{eq:2.11}
	\norm{\left(\mathcal{K}'(\varphi)v \right)^{'}}_\infty \leq C_4 \norm{v}_\infty.
\end{equation}
It follows that
\begin{equation}\label{eq:2.12}
	\norm{(I - \pi_{n})\left( \mathcal{K}'(\varphi)v \right)}_\infty \leq C_1 C_4 \norm{v}_\infty  h.
\end{equation}

The following crucial estimate 
\begin{align}\label{eq12}
\norm{\mathcal{K}'(\varphi)(I - \pi_{n})\varphi}_\infty = \left\{ {\begin{array}{ll}
			 O\left( h^{2}\right), ~~~ ~~ ~ r = 1,   \\
			 O\left( h^{r+2}\right), ~~~\: r \geq 2.
\end{array}}\right.
\end{align}
follows from \textit{Lemma 9} of Chatelin-Lebbar \cite{CL}. From \textit{Theorem 3.1} of Kulkarni \cite{kul}, we have 
\begin{align}\label{eq:2.14}
	\norm{(I - \pi_{n})\mathcal{K}'(\varphi)(I - \pi_{n})\varphi}_\infty = O\left( h^{r+2}\right), \quad r \geq 1.
\end{align}

In order to prove our main result, we need to establish the following lemmas and propositions. Note that
\begin{equation}\label{rel:1}
x_n^G - \varphi = \pi_n(x_n^S - \varphi) - (I - \pi_{n})\varphi.
\end{equation}
We use the above relation between Galerkin and iterated Galerkin solution several times in the following lemmas and propositions.

\begin{lemma}\label{lem:1}
	Let $x_n^G$ be the Galerkin solution defined by the equation \eqref{eq:Gal}. Then for $r \geq 1$, 
	\begin{align}\label{eq5}
	\left( I - \K'(\varphi) \right)^{-1}\mathcal{K}''(\varphi)(x_n^G - \varphi)^2 (s) = (\mathcal{V}_1 (\varphi))(s) \: h^{2r} + O\left( h^{2r+2} \right), \quad s \in [0, 1],
	\end{align}
where $\mathcal{V}_1$ is defined by \eqref{asy_exp2}.
\end{lemma}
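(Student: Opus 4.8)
The plan is to relate the quantity $\left( I - \K'(\varphi) \right)^{-1}\mathcal{K}''(\varphi)(x_n^G - \varphi)^2$ to the already-established asymptotic expansion \eqref{asy_exp2}, which controls $\mathcal{M}_2(\pi_n\varphi - \varphi)^2 = \left( I - \K'(\varphi) \right)^{-1}\K''(\varphi)(\pi_n\varphi-\varphi)^2$. The bridge is the relation \eqref{rel:1}, namely $x_n^G - \varphi = \pi_n(x_n^S - \varphi) - (I - \pi_n)\varphi$. Writing $e_n := \pi_n(x_n^S - \varphi)$ and $\rho_n := (I-\pi_n)\varphi$, so that $x_n^G - \varphi = e_n - \rho_n$, the bilinearity of $\K''(\varphi)$ gives
\begin{equation*}
\K''(\varphi)(x_n^G-\varphi)^2 = \K''(\varphi)(\rho_n)^2 - 2\,\K''(\varphi)(e_n,\rho_n) + \K''(\varphi)(e_n)^2.
\end{equation*}
Applying $\left( I - \K'(\varphi) \right)^{-1}$ and using \eqref{asy_exp2} on the first term, the whole claim reduces to showing that the two remaining terms are $O(h^{2r+2})$.

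For this I would use the known convergence orders together with the smoothing estimates proved in Section 2. First, $\norm{e_n}_\infty \leq \norm{\pi_n}\,\norm{x_n^S - \varphi}_\infty = O(h^{r+2})$ for $r\geq 2$ (and $O(h^2)$ for $r=1$) by \eqref{eq:1.5}, \eqref{eq1.5} and \eqref{eq:2.3}, while $\norm{\rho_n}_\infty = O(h^r)$ by \eqref{eq2.4}. Hence $\norm{\left( I - \K'(\varphi) \right)^{-1}\K''(\varphi)(e_n)^2}_\infty = O(h^{2r+4})$ for $r\geq 2$, which is well within $O(h^{2r+2})$; the case $r=1$ gives $O(h^4) = O(h^{2r+2})$ as needed. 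For the cross term $\left( I - \K'(\varphi) \right)^{-1}\K''(\varphi)(e_n,\rho_n)$, the crude bound $\norm{e_n}_\infty\norm{\rho_n}_\infty = O(h^{2r+2})$ for $r\geq 2$ already suffices; for $r=1$ it gives only $O(h^3)$, so there one must argue more carefully, exploiting that $e_n\in\mathscr X_n$ and that applying $(I-\pi_n)$ to a product involving $\K''$-type kernels gains an extra factor $h$ via \eqref{eq:2.10} — i.e. absorbing $e_n$ and $\rho_n=(I-\pi_n)\varphi$ into an expression to which the orthogonality of $\pi_n$ and the estimate \eqref{eq12} or \eqref{eq:2.14} can be applied, yielding the missing power of $h$.

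The main obstacle I anticipate is precisely the bookkeeping in the $r=1$ case and, more generally, making sure the error terms in \eqref{asy_exp2} (which is stated for $\mathcal{M}_2(\pi_n\varphi-\varphi)^2$, a deterministic quantity) are genuinely inherited by $\mathcal{M}_2(x_n^G-\varphi)^2$ with the correct constant $\mathcal{V}_1(\varphi)$ and no spurious lower-order contamination. One must check that no term of order $h^{2r}$ other than $\mathcal{V}_1(\varphi)h^{2r}$ survives: the $(e_n)^2$ term is $O(h^{2r+4})$, and the cross term, after the refined argument, is $O(h^{2r+2})$, so the leading coefficient is unchanged. A secondary technical point is that $\left( I - \K'(\varphi) \right)^{-1}$ is a bounded operator on $\mathcal{X}$ (by the assumption that $1$ is not an eigenvalue of the compact operator $\K'(\varphi)$), so it preserves all the $O(h^k)$ estimates in the $\norm{\cdot}_\infty$ norm; this is routine but should be stated. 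Assembling these pieces gives \eqref{eq5}.
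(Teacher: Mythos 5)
Your decomposition is exactly the paper's: split $x_n^G-\varphi$ via \eqref{rel:1}, apply \eqref{asy_exp2} to the $\left((I-\pi_n)\varphi\right)^2$ term, and show the other two terms are $O(h^{2r+2})$. For $r\ge 2$ your estimates are complete and agree with the paper's \eqref{eq7} and \eqref{eq:2.23}, and your remark about the boundedness of $\left(I-\K'(\varphi)\right)^{-1}$ is the right (routine) justification for pushing the expansion through. The one genuine gap is the $r=1$ cross term, which you correctly flag as the obstacle but do not close: the tools you point to do not apply to it as stated. Equation \eqref{eq:2.10} bounds $(I-\pi_n)$ applied to the \emph{output} of $\K''$, whereas the cross term $\K''(\varphi)\left(\pi_n(x_n^S-\varphi),\,(I-\pi_n)\varphi\right)$ carries no outer $(I-\pi_n)$; and \eqref{eq12}, \eqref{eq:2.14} concern $\K'(\varphi)(I-\pi_n)\varphi$, not a bilinear term whose other argument $\pi_n(x_n^S-\varphi)$ changes from subinterval to subinterval, so neither can be invoked globally.

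What actually closes the gap (and is what the paper does) is a \emph{local} version of the Chatelin--Lebbar superconvergence estimate. Write the cross term as $\sum_{j=1}^{n}\int_{t_{j-1}}^{t_j} q_{*}(s,t)\,\bigl(\pi_{n,j}(x_n^S-\varphi)\bigr)(t)\,(I-\pi_{n,j})\varphi(t)\,dt$. For $r=1$ the factor $\pi_{n,j}(x_n^S-\varphi)$ is a constant on $\Delta_j$ and pulls out of the $j$-th integral; the remaining local integral satisfies $\bigl|\int_{\Delta_j} q_{*}(s,t)(I-\pi_{n,j})\varphi(t)\,dt\bigr|=O(h^{3})$, since $(I-\pi_{n,j})\varphi$ is orthogonal to constants on $\Delta_j$, so one may replace $q_{*}(s,\cdot)$ by $(I-\pi_{n,j})q_{*}(s,\cdot)$ and each factor is then $O(h)$ on an interval of length $h$ (this is the argument of Lemma 9 of Chatelin--Lebbar, applied subinterval by subinterval). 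Summing the $n=1/h$ contributions and using $\norm{x_n^S-\varphi}_\infty=O(h^{2})$ gives $O(h^{4})=O(h^{2r+2})$ for $r=1$. Without this local step your argument establishes \eqref{eq5} only for $r\ge 2$.
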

\begin{proof}
Using \eqref{rel:1}, we write
\begin{align*}
\K''(\varphi)(x_n^G - \varphi)^2 & = ~ \K''(\varphi) \left[ \pi_n(x_n^S - \varphi) - (I - \pi_{n})\varphi \right]^2 \notag \\
& = ~ \K''(\varphi) \left( \pi_n(x_n^S - \varphi) \right)^2 \notag \\
& ~~~ - 2 ~ \K''(\varphi) \left( \pi_n(x_n^S - \varphi),  (I - \pi_{n})\varphi \right) \notag \\
& ~~~ + ~  \K''(\varphi) \left( (I - \pi_{n})\varphi \right)^2.
\end{align*}	
It follows that
\begin{align}\label{eq6}
	\left( I - \K'(\varphi) \right)^{-1}	\K''(\varphi)(x_n^G - \varphi)^2 
	& = ~ 	\mathcal{M}_2 \left( \pi_n(x_n^S - \varphi) \right)^2 \notag \\
	& ~~~ - 2 ~ 	\left( I - \K'(\varphi) \right)^{-1}\mathcal{K}''(\varphi) \left( \pi_n(x_n^S - \varphi),  (I - \pi_{n})\varphi \right) \notag \\
	& ~~~ + ~  \mathcal{M}_2 \left( (I - \pi_{n})\varphi \right)^2.
\end{align}
Since $\displaystyle{\left( I - \K'(\varphi) \right)^{-1} \K^{(3)}(\varphi)}$ is bounded, from \eqref{eq:1.5}, \eqref{eq:2.3}, \eqref{eq2.4} and \eqref{eq:2.9}, it is easy to see that
\begin{equation}\label{eq7}
	\norm{\mathcal{M}_2 \left( \pi_n(x_n^S - \varphi) \right)^2}_\infty = \left\{ {\begin{array}{ll}
		O\left( h^{4}\right), ~~~~ r = 1,   \\
		O\left( h^{2r+4} \right), ~~~\: r \geq 2.
	\end{array}}\right.
\end{equation}
and
\begin{equation*}
	\norm{\K''(\varphi) \left( \pi_n(x_n^S - \varphi),  (I - \pi_{n})\varphi \right)}_\infty = \left\{ {\begin{array}{ll}
		    O\left( h^{3}\right), ~~~~ r = 1,   \\
		    O\left( h^{2r+2} \right), ~~~\: r \geq 2.
	\end{array}}\right.
\end{equation*}
When $r=1$, that is, when $\mathscr{X}_n$ is the space of piecewise constant functions, the order of the term $\norm{\K''(\varphi) \left( \pi_n(x_n^S - \varphi),  (I - \pi_{n})\varphi \right)}_\infty$ can be improved to $h^4$ in the following way. Note that
\begin{align*}
	\K''(\varphi) \left( \pi_n(x_n^S - \varphi),  (I - \pi_{n})\varphi \right)(s) & = \int_{0}^{1} q_{*}(s,t) \: (\pi_n(x_n^S - \varphi))(t) \: (I - \pi_{n})\varphi(t) \: dt \\
	& =  \sum_{j=1}^{n} \int_{t_{j-1}}^{t_j} q_{*}(s,t) (\pi_{n, j}(x_n^S - \varphi))(t) \: (I - \pi_{n,j})\varphi(t) \: dt.
\end{align*}
Since, the range of $\pi_{n}$ is $\mathscr{X}_n$, $\pi_{n, j}(x_n^S - \varphi)$ is a constant on $[t_{j-1}, t_j]$. It follows that
\begin{multline*}
	\K''(\varphi) \left( \pi_n(x_n^S - \varphi),  (I - \pi_{n})\varphi \right)(s) 
	 = \\  \sum_{j=1}^{n} (\pi_{n, j}(x_n^S - \varphi))\left( \frac{t_{j-1}+t_j}{2} \right) \int_{t_{j-1}}^{t_j} q_{*}(s,t) \: (I - \pi_{n,j})\varphi(t) \: dt.
\end{multline*}
As in the proof of Lemma 9 of Chatelin-Lebbar \cite{CL}, it can be shown that
$$\left| \int_{t_{j-1}}^{t_j} q_{*}(s,t) \: (I - \pi_{n,j})\varphi(t) \: dt \right| =  O\left( h^{3} \right). $$
Since $\left\{ \pi_{n, j} \right\}$ is uniformly bounded and $\norm{x_n^S - \varphi}_\infty = O\left( h^{2} \right)$, from the above estimate it follows that
\begin{equation*}
	\norm{\K''(\varphi) \left( \pi_n(x_n^S - \varphi),  (I - \pi_{n})\varphi \right)}_\infty = O\left( h^{4} \right), \quad r =1.
\end{equation*}
Hence,
\begin{equation}\label{eq:2.23}
	\norm{\K''(\varphi) \left( \pi_n(x_n^S - \varphi),  (I - \pi_{n})\varphi \right)}_\infty = \left\{ {\begin{array}{ll}
			O\left( h^{4}\right), ~~~~~~~ r = 1,   \\
			O\left( h^{2r+2} \right), ~~~\: r \geq 2.
	\end{array}}\right.
\end{equation}

Hence, \eqref{eq5} follows from \eqref{asy_exp2}, \eqref{eq6} and \eqref{eq:2.23}, the proof of the proposition is complete. 
\end{proof}

\begin{lemma}\label{lem:2}
Let $s \in [0, 1]$ and $r \geq 1$. Then
\begin{equation}\label{eq9}
	\left( I - \K'(\varphi) \right)^{-1} \mathcal{K}^{(3)}(\varphi)(x_n^G - \varphi)^3 (s) = \left\{ {\begin{array}{ll}
		O\left( h^{4}\right)	, \qquad \qquad \qquad \qquad ~  r = 1,   \\
		(\mathcal{V}_2(\varphi))(s) \: h^{3r} + O\left( h^{3r+1} \right)	, ~~\: r \geq 2.
	\end{array}}\right.
\end{equation}
where $\mathcal{V}_2$ is defined by \eqref{asy_exp3}.
\end{lemma}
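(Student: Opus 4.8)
The plan is to follow the template of the proof of Lemma~\ref{lem:1}, with the quadratic expansion replaced by the corresponding cubic one. First I would insert the identity \eqref{rel:1} into the trilinear form and expand, using multilinearity and symmetry of $\K^{(3)}(\varphi)$:
\begin{align*}
\K^{(3)}(\varphi)(x_n^G - \varphi)^3
&= \K^{(3)}(\varphi)\bigl(\pi_n(x_n^S - \varphi)\bigr)^3
- 3\,\K^{(3)}(\varphi)\bigl((\pi_n(x_n^S - \varphi))^2,\,(I-\pi_n)\varphi\bigr) \\
&\quad + 3\,\K^{(3)}(\varphi)\bigl(\pi_n(x_n^S - \varphi),\,((I-\pi_n)\varphi)^2\bigr)
- \K^{(3)}(\varphi)\bigl((I-\pi_n)\varphi\bigr)^3 .
\end{align*}
Applying the bounded operator $\bigl(I - \K'(\varphi)\bigr)^{-1}$ to each term, and using that $(I-\pi_n)\varphi = -(\pi_n\varphi - \varphi)$ so that the last term contributes $\mathcal{M}_3(\pi_n\varphi - \varphi)^3$ after the sign flips in the cube, the claim reduces to showing that the first three terms are of order $O(h^{3r+1})$ for $r \geq 2$ (resp.\ $O(h^4)$ when $r=1$) and then invoking the expansion \eqref{asy_exp3} (resp.\ the estimate \eqref{eq:2.13}) for the surviving term.

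For $r\geq 2$, I would use \eqref{eq:2.3} together with the convergence rate \eqref{eq:1.5} to get $\norm{\pi_n(x_n^S-\varphi)}_\infty = O(h^{r+2})$, and \eqref{eq2.4} applied to $\varphi\in C^{2r+2}[0,1]$ to get $\norm{(I-\pi_n)\varphi}_\infty = O(h^{r})$. Since $\mathcal{M}_3 = \bigl(I-\K'(\varphi)\bigr)^{-1}\K^{(3)}(\varphi)$ and its associated mixed trilinear operators are bounded on $L^\infty[0,1]$, the first three terms are then $O(h^{3r+6})$, $O(h^{3r+4})$ and $O(h^{3r+2})$ respectively, all of which are absorbed into $O(h^{3r+1})$; the remaining term is $\mathcal{M}_3(\pi_n\varphi-\varphi)^3(s) = \mathcal{V}_2(\varphi)(s)\,h^{3r} + O(h^{3r+1})$ by \eqref{asy_exp3}, which is precisely the claimed expansion. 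For $r=1$, the same bookkeeping with $\norm{\pi_n(x_n^S-\varphi)}_\infty = O(h^{2})$ (from \eqref{eq1.5}) and $\norm{(I-\pi_n)\varphi}_\infty = O(h)$ gives the first three terms as $O(h^{6})$, $O(h^{5})$ and $O(h^{4})$, while the last term is $O(h^{4})$ by \eqref{eq:2.13} (consistent with $\mathcal{V}_2(\varphi)=0$ when $r=1$); summing yields $O(h^{4})$.

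I expect this to be essentially routine bookkeeping; the one place to be mildly careful is that, in contrast with the mixed quadratic term in Lemma~\ref{lem:1} (which required the extra piecewise-midpoint quadrature argument to sharpen its order to $h^4$), here every crude bound already lands strictly inside the target remainder $O(h^{3r+1})$ for $r\geq 2$ and inside $O(h^4)$ for $r=1$, so no such refinement is needed. The only structural point worth recording is the boundedness on $L^\infty[0,1]$ of $\mathcal{M}_3$ and of the associated mixed trilinear operators, which follows from the $\K^{(i)}(\varphi)$-norm estimates of the type \eqref{eq:2.9} together with the Green's-type smoothness of the kernels of $\K'(\varphi)$, $\K''(\varphi)$ and $\K^{(3)}(\varphi)$ already established above.
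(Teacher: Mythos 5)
Your proposal is correct and follows essentially the same route as the paper: the same trinomial expansion of $\K^{(3)}(\varphi)\left[\pi_n(x_n^S-\varphi)-(I-\pi_n)\varphi\right]^3$, the same crude bounds $O(h^{3r+6})$, $O(h^{3r+4})$, $O(h^{3r+2})$ (resp.\ $O(h^6)$, $O(h^5)$, $O(h^4)$ for $r=1$) on the first three terms via \eqref{eq:1.5}, \eqref{eq:2.3}, \eqref{eq2.4}, and the same appeal to \eqref{asy_exp3} and \eqref{eq:2.13} for the surviving term. If anything you are more careful than the paper about the sign flip $(I-\pi_n)\varphi = -(\pi_n\varphi-\varphi)$ in the cubed term, which the paper's intermediate display glosses over.
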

\begin{proof}
We write
\begin{align}\label{eq10}
\left( I - \K'(\varphi) \right)^{-1} \K^{(3)}(\varphi)(x_n^G - \varphi)^3 & = ~ \left( I - \K'(\varphi) \right)^{-1}\K^{(3)}(\varphi) \left[ \pi_n(x_n^S - \varphi) - (I - \pi_{n})\varphi \right]^3 \notag \\
& = ~ \mathcal{M}_3  \left( \pi_n(x_n^S - \varphi) \right)^3 \notag \\
& ~~~ - 3 ~ \mathcal{M}_3 \left( \pi_n(x_n^S - \varphi), \pi_n(x_n^S - \varphi),  (I - \pi_{n})\varphi \right) \notag \\
& ~~~ + 3 ~ \mathcal{M}_3 \left( \pi_n(x_n^S - \varphi), (I - \pi_{n})\varphi, (I - \pi_{n})\varphi \right)\notag \\
& ~~~ - ~ \mathcal{M}_3 \left( (I - \pi_{n})\varphi \right)^3
\end{align}	
Since $\displaystyle{\mathcal{M}_3 = \left( I - \K'(\varphi) \right)^{-1} \K^{(3)}(\varphi)}$ is bounded, \eqref{eq:1.5}, \eqref{eq:2.3}, \eqref{eq2.4} we have
\begin{equation}\label{eq:2.26}
	\norm{\mathcal{M}_3 \left( \pi_n(x_n^S - \varphi) \right)^3}_\infty = \left\{ {\begin{array}{ll}
			O\left( h^{6}\right),	 \qquad  ~  r = 1,   \\
			O\left( h^{3r+6} \right), \quad    r \geq 2.
	\end{array}}\right.
\end{equation}
\begin{equation}\label{eq:2.27}
	\norm{\mathcal{M}_3 \left( \pi_n(x_n^S - \varphi), \pi_n(x_n^S - \varphi),  (I - \pi_{n})\varphi \right)}_\infty = \left\{ {\begin{array}{ll}
			O\left( h^{5}\right),	 \qquad  ~  r = 1,   \\
			O\left( h^{3r+4} \right), \quad    r \geq 2.
	\end{array}}\right.
\end{equation}
\begin{equation}\label{eq:2.28}
	\norm{\mathcal{M}_3 \left( \pi_n(x_n^S - \varphi), (I - \pi_{n})\varphi, (I - \pi_{n})\varphi \right)}_\infty = \left\{ {\begin{array}{ll}
			O\left( h^{4}\right),	 \qquad  ~  r = 1,   \\
			O\left( h^{3r+2} \right), \quad    r \geq 2.
	\end{array}}\right.
\end{equation}
On the other hand, from \eqref{asy_exp3} we have
\begin{equation*}
	\mathcal{M}_3 \left( (I - \pi_{n})\varphi \right)^3 (s) = \left\{ {\begin{array}{ll}
			O\left( h^{4}\right)	, \qquad \qquad \qquad \qquad ~  r = 1,   \\
			(\mathcal{V}_2(\varphi))(s) \: h^{3r} + O\left( h^{3r+1} \right)	, ~~\: r \geq 2.
	\end{array}}\right.
\end{equation*}
Hence, \eqref{eq9} follows from \eqref{eq10}, \eqref{eq:2.26}, \eqref{eq:2.27}, \eqref{eq:2.28} and the above equation. 
\end{proof}

\section{The Main Result}
Recall that the Iterated Galerkin solution is defined by 
\begin{align*}
	x_n^S - \K(\pi_{n} x_n^S) = f
\end{align*}
and the exact solution as
\begin{equation*}
	\varphi - \mathcal{K}(\varphi) = f.
\end{equation*}
In this section, we prove our main result about the asymptotic series expansion for
the iterated Galerkin solution $x_n^S$ at the partition points $t_i, \: i = 0, 1, \dots, n$. That is, we will show the following. 
\begin{align}\label{Sloan_asym}
	\varphi(t_i) - x_n^S (t_i) = \mathcal{A}_{2r}(t_i) h^{2r} + O\left( h^{2r+2} \right),
\end{align}
where $\mathcal{A}_{2r}$ is a function independent of $n$. 

Then, we can apply Richardson extrapolation to obtain an approximation of $\varphi$ with higher order at the partition points. From Ford et al \cite[Section 5]{Ford}, it can be shown that a continuous function can be reconstructed from the extrapolated discrete values at the partition points and it approximates the exact solution $\varphi$ to higher order in the uniform norm. We will not discuss this thing here. Our main aim is to prove \eqref{Sloan_asym}. 

Recall that
$$ \mathcal{M} = \left( I - \K'(\varphi) \right)^{-1}\K'(\varphi)$$
with \begin{align}\label{emm}
	(\mathcal{M}x)(s) = \int_0^1  m(s, t) ~ x (t) ~ d t, \;\;\; s \in [0, 1], \; x \in \mathcal{X}, 
\end{align}
where the kernel $m$ is of the type of Green's function. 

We quote the following expression for the error in the iterated Galerkin solution from Atkinson et al \cite[equation (2.28)]{AGS}:
\begin{align}\label{eq:2.6}
	x_n^S - \varphi & ~=~ \left( I - \K'(\varphi) \right)^{-1} \left\{ \left[  \K(x_n^G) - \K(\varphi) - \K'(\varphi)(x_n^G - \varphi)  \right] \right\}\notag \\
	& ~~~ - \mathcal{M}(I - \pi_{n}) \left[  \K(x_n^G) - \K(\varphi) - \K'(\varphi)(x_n^G - \varphi)  \right] \notag\\
	& ~~~ - \mathcal{M}(I - \pi_{n}) \K'(\varphi)(x_n^G - \varphi) \notag\\
	& ~~~ - \mathcal{M}(I - \pi_{n})\varphi.
\end{align}

By the following propositions, we will prove that the second and the third terms on the right hand side of the above equation are of the order $2r+2$ or higher, and the first and the last term has an asymptotic expansion at the partition points.

 Let 
\begin{eqnarray}\nonumber
	C_5 & = &  \max_{0 \leq i \leq 4} \left( \sup_{ \stackrel {s, t \in [0, 1]} {|u|\leq \|\varphi\|_\infty + \delta}} 
	\left| \frac{\partial^i \kappa}{\partial u^i} (s, t, u) \right | \right).
\end{eqnarray}

Let us investigate the first term on the right hand side of the equation \eqref{eq:2.26} for an asymptotic expansion.
\begin{proposition}\label{prop:1}
Let $x_n^G$ be the Galerkin solution defined by the equation \eqref{eq:Gal} and $s \in [0, 1]$. Then for $r \geq 1,$
	\begin{equation*}
		\left( I - \K'(\varphi) \right)^{-1}  \left[  \K(x_n^G) - \K(\varphi) - \K'(\varphi)(x_n^G - \varphi)  \right]  (s) = \mathcal{V}_1(\varphi)(s) \: h^{2r} + O\left( h^{2r+2} \right),
	\end{equation*}
where $\mathcal{V}_1$ is defined by \eqref{asy_exp2}.
\end{proposition}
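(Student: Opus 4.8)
The plan is to Taylor-expand $\K$ about the exact solution $\varphi$, apply the bounded operator $\left(I-\K'(\varphi)\right)^{-1}$ term by term, read off the leading $h^{2r}$ contribution from the quadratic term via Lemma~\ref{lem:1}, and show that the cubic term (via Lemma~\ref{lem:2}) and the fourth-order remainder (via a crude norm bound) are $O(h^{2r+2})$. This mirrors the structure already established for the individual pieces $\mathcal{M}_2(x_n^G-\varphi)^2$ and $\mathcal{M}_3(x_n^G-\varphi)^3$.

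First I would use that $\K$ is four times Fr\'echet differentiable on $\mathcal{B}(\varphi,\delta)$, which (being convex) contains the whole segment joining $\varphi$ and $x_n^G$, to write by Taylor's theorem with integral remainder
$$\K(x_n^G)-\K(\varphi)-\K'(\varphi)(x_n^G-\varphi)=\tfrac12\,\K''(\varphi)(x_n^G-\varphi)^2+\tfrac16\,\K^{(3)}(\varphi)(x_n^G-\varphi)^3+\mathcal{R}_n,$$
where $\mathcal{R}_n$ is the fourth-order integral remainder, so that $\norm{\mathcal{R}_n}_\infty\le C\,\norm{x_n^G-\varphi}_\infty^4$ for a constant $C$ depending only on $C_5$ (the intermediate points stay in $\mathcal{B}(\varphi,\delta)$, so $\norm{\K^{(4)}}$ is bounded there uniformly in $n$). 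By \eqref{eq1.5}--\eqref{eq:1.5}, $\norm{x_n^G-\varphi}_\infty^4$ is $O(h^4)$ when $r=1$ and $O(h^{4r})$ when $r\ge2$; since $4r\ge 2r+2$ for every $r\ge1$, this yields $\norm{\mathcal{R}_n}_\infty=O(h^{2r+2})$, and hence $\norm{\left(I-\K'(\varphi)\right)^{-1}\mathcal{R}_n}_\infty=O(h^{2r+2})$ by boundedness of the resolvent.

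Next I would apply $\left(I-\K'(\varphi)\right)^{-1}$ to the two explicit lower-order terms. The quadratic term, recalling $\left(I-\K'(\varphi)\right)^{-1}\K''(\varphi)=\mathcal{M}_2$, is governed by Lemma~\ref{lem:1}: $\left(I-\K'(\varphi)\right)^{-1}\K''(\varphi)(x_n^G-\varphi)^2(s)=\mathcal{V}_1(\varphi)(s)\,h^{2r}+O(h^{2r+2})$, which supplies the leading $h^{2r}$-term in the statement. The cubic term is handled by Lemma~\ref{lem:2}: for $r=1$ it is $O(h^4)=O(h^{2r+2})$, while for $r\ge2$ it equals $(\mathcal{V}_2(\varphi))(s)\,h^{3r}+O(h^{3r+1})$, which is $O(h^{2r+2})$ because $3r\ge 2r+2$ whenever $r\ge2$. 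Collecting the three estimates gives the proposition.

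The only genuinely delicate point — and the step I expect to be the obstacle — is the exponent bookkeeping that forces the split $r=1$ versus $r\ge2$. For $r=1$ a crude estimate of the cubic term using only $\norm{x_n^G-\varphi}_\infty=O(h)$ produces $O(h^3)$, one power short of the required $O(h^4)=O(h^{2r+2})$; one must therefore invoke the sharper bound of Lemma~\ref{lem:2}, which itself rests on the midpoint/orthogonality cancellation for the piecewise-constant projection recorded in \eqref{asy_exp3}, \eqref{eq:2.13} and \eqref{eq:2.28}. For $r\ge2$ nothing beyond verifying that the exponents $3r$ and $4r$ exceed $2r+2$ is needed, and one only has to be careful that the intermediate points in Taylor's formula remain in $\mathcal{B}(\varphi,\delta)$ so that $C_5$ controls the remainder uniformly in $n$.
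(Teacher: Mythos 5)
Your proposal is correct and follows essentially the same route as the paper: the generalized Taylor expansion with fourth-order integral remainder bounded via $C_5$ and $\norm{x_n^G-\varphi}_\infty^4=O(h^{4r})$, Lemma~\ref{lem:1} for the quadratic term, Lemma~\ref{lem:2} for the cubic term, and the exponent checks $3r\geq 2r+2$ (for $r\geq 2$) and $4r\geq 2r+2$ (for $r\geq 1$). No substantive differences to report.
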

\begin{proof}
	 Using the generalized Taylor's series expansion (see Linz \cite{Linz}) in the neighbourhood $\mathcal{B}(\varphi, \delta)$, we obtain
	\begin{align*}
		\K(x_n^G) - \K(\varphi) - & \K'(\varphi)(x_n^G - \varphi)  \\ 
		& = \frac{1}{2} \K''(\varphi)(x_n^G - \varphi)^2 + \frac{1}{6} \K^{(3)}(\varphi)(x_n^G - \varphi)^3  + \mathcal{R}_4 \left( x_n^G, \varphi \right),
	\end{align*}
where
$$\mathcal{R}_4 \left( x_n^G, \varphi \right) = \frac {1} {6} \int_0^1 \mathcal{K}^{(4)} \left(\varphi + \theta (x_n^G - \varphi) \right) (x_n^G - \varphi)^4
(1 - \theta)^3 \; d \theta.$$
By \eqref{eq:2.1}, we have
	\begin{align*}
		\mathcal{K}^{(4)} \left(\varphi + \theta (x_n^G - \varphi) \right) (x_n^G - \varphi)^4 (s) = \int_{0}^{1} \frac{\partial^4 \kappa}{\partial u^4} \left(s, t, \varphi(t) + \theta (x_n^G - \varphi)(t) \right)(x_n^G - \varphi)^4 (t) \: dt.
	\end{align*}
	Since $\norm{x_n^G - \varphi}_\infty \rightarrow 0$ as $n \rightarrow \infty$ and $\theta \in (0, 1)$, $$\varphi + \theta (x_n^G - \varphi) \in \mathcal{B}(\varphi, \delta).$$ It follows that
	\begin{eqnarray}\nonumber
		\norm{\mathcal{K}^{(4)} \left(\varphi + \theta (x_n^G - \varphi) \right) (x_n^G - \varphi)^4}_\infty \leq C_5 \norm{x_n^G - \varphi}_\infty^4
	\end{eqnarray}
	Using \eqref{eq:1.5} and the above estimate, we obtain
	\begin{equation}\label{eq:2.7}
	\mathcal{R}_4 \left( x_n^G, \varphi \right)	= O\left( h^{4r} \right).
	\end{equation}
Note that
\begin{align*}
	\left( I - \K'(\varphi) \right)^{-1}  & \left[  \K(x_n^G) - \K(\varphi) - \K'(\varphi)(x_n^G - \varphi)  \right] \\  
	= & \frac{1}{2} \: \left( I - \K'(\varphi) \right)^{-1} \K''(\varphi) (x_n^G - \varphi)^2 + \frac{1}{6} \: \left( I - \K'(\varphi) \right)^{-1} \K^{(3)}(\varphi)(x_n^G - \varphi)^3  \\
	& ~ +   \left( I - \K'(\varphi) \right)^{-1} \mathcal{R}_4 \left( x_n^G, \varphi \right).
\end{align*}
Hence, by Lemma \ref{lem:1}, Lemma \ref{lem:2}, equation \eqref{eq:2.7} and and the above estimate, we obtain the followings. \\
For $r = 1$,
\begin{equation*}
	\left( I - \K'(\varphi) \right)^{-1}  \left[  \K(x_n^G) - \K(\varphi) - \K'(\varphi)(x_n^G - \varphi)  \right]  (s)  = \mathcal{V}_1(\varphi)(s) \: h^{2r} +  O\left( h^{4} \right),
\end{equation*}
for $r \geq 2$,
\begin{multline*}
	\left( I - \K'(\varphi) \right)^{-1}  \left[  \K(x_n^G) - \K(\varphi) - \K'(\varphi)(x_n^G - \varphi)  \right]  (s)  \\ = \mathcal{V}_1(\varphi)(s) \: h^{2r} +  \mathcal{V}_2(\varphi)(s) \: h^{3r} + O\left( h^{2r+2} \right).
\end{multline*}
Since $3r \geq 2r+2$ for $r \geq 2$, 
\begin{equation*}
	\left( I - \K'(\varphi) \right)^{-1}  \left[  \K(x_n^G) - \K(\varphi) - \K'(\varphi)(x_n^G - \varphi)  \right]  (s)  = \mathcal{V}_1(\varphi)(s) \: h^{2r} +  O\left( h^{2r+2} \right), \quad r \geq 2.
\end{equation*}
This completes the proof.
\end{proof}

 Now we investigate the second term on the R.H.S. of the equation \eqref{eq:2.6}.
\begin{proposition}\label{prop:2} Let $\displaystyle{ \left\{t_i : i = 0, 1, \dots, n \right\}}$ be the set of all partition points defined by \eqref{partition_points}. Then for $r \geq 1$,
\begin{equation*}
	\mathcal{M}(I - \pi_{n}) \left[  \K(x_n^G) - \K(\varphi) - \K'(\varphi)(x_n^G - \varphi)  \right](t_i) = O\left( h^{2r+2} \right).
\end{equation*}	
\end{proposition}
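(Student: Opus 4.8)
The plan is to reduce the expression $\mathcal{M}(I-\pi_n)\big[\K(x_n^G)-\K(\varphi)-\K'(\varphi)(x_n^G-\varphi)\big](t_i)$ to terms whose orders are already under control from Section 2. First I would apply the generalized Taylor expansion exactly as in the proof of Proposition \ref{prop:1}, writing
\[
\K(x_n^G)-\K(\varphi)-\K'(\varphi)(x_n^G-\varphi) = \tfrac12 \K''(\varphi)(x_n^G-\varphi)^2 + \tfrac16 \K^{(3)}(\varphi)(x_n^G-\varphi)^3 + \mathcal{R}_4(x_n^G,\varphi),
\]
so that the quantity to estimate splits into three pieces after applying $\mathcal{M}(I-\pi_n)$. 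The remainder term is harmless: since $\mathcal{M}$ is bounded, $\sup_n\norm{\pi_n}<\infty$ by \eqref{eq:2.3}, and $\mathcal{R}_4(x_n^G,\varphi)=O(h^{4r})$ by \eqref{eq:2.7}, we get $\mathcal{M}(I-\pi_n)\mathcal{R}_4 = O(h^{4r})$, which is $O(h^{2r+2})$ for all $r\ge 1$. Likewise the cubic term is easy for $r\ge 1$: expanding $(x_n^G-\varphi)^3$ isn't even necessary — boundedness of $\mathcal{M}_3=(I-\K'(\varphi))^{-1}\K^{(3)}(\varphi)$, the crude bound $\norm{(I-\pi_n)\K^{(3)}(\varphi)(x_n^G-\varphi)^3}_\infty \lesssim \norm{x_n^G-\varphi}_\infty^3$ together with \eqref{eq1.5}/\eqref{eq:1.5} and \eqref{eq:2.3} give $O(h^{3r})$, hence $O(h^{2r+2})$ when $r\ge 2$; for $r=1$ one needs a little more, and I would instead invoke the $r=1$ improvement already obtained in Lemma \ref{lem:2} (the cubic piece is $O(h^4)=O(h^{2r+2})$ there), again using boundedness of $\mathcal{M}$ and uniform boundedness of $\pi_n$ to absorb the extra factor $(I-\pi_n)$.

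The main work is therefore the quadratic term $\mathcal{M}(I-\pi_n)\K''(\varphi)(x_n^G-\varphi)^2(t_i)$. Here the point is that a naive estimate using only $(I-\pi_n)$ together with \eqref{eq:2.10} (which says $\norm{(I-\pi_n)(\K''(\varphi)(v_1,v_2))}_\infty \le C_1C_3\norm{v_1}_\infty\norm{v_2}_\infty h$) would only give $O(h^{2r+1})$, losing one power of $h$; one extra power has to be recovered. The plan is to use the relation \eqref{rel:1}, $x_n^G-\varphi = \pi_n(x_n^S-\varphi)-(I-\pi_n)\varphi$, to split $\K''(\varphi)(x_n^G-\varphi)^2$ into the three summands $\K''(\varphi)(\pi_n(x_n^S-\varphi))^2$, $-2\K''(\varphi)(\pi_n(x_n^S-\varphi),(I-\pi_n)\varphi)$, and $\K''(\varphi)((I-\pi_n)\varphi)^2$, exactly as in the proof of Lemma \ref{lem:1}. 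For the first of these, combining \eqref{eq7}/\eqref{eq:2.23} with the extra factor $h$ from the Lipschitz-in-$s$ estimate \eqref{eq:2.10} (or just the uniform boundedness of $\mathcal{M}(I-\pi_n)$ against the already-small norm) comfortably beats $h^{2r+2}$. The genuinely delicate summand is $\mathcal{M}(I-\pi_n)\K''(\varphi)((I-\pi_n)\varphi)^2(t_i)$, where one must gain a power of $h$ beyond the generic $(I-\pi_n)$ bound.

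I expect that delicate summand to be the crux. The strategy there is to treat it as one does the crucial estimates \eqref{eq12} and \eqref{eq:2.14}: the function $g := \K''(\varphi)((I-\pi_n)\varphi)^2$ is differentiable with $\norm{g'}_\infty = O(h^{2\beta})$ where $\beta=\min\{\alpha,r\}$ — its value uses \eqref{eq2.4} twice, while Lemma \ref{lem:2.1} (bounding $(\K''(x)(v_1,v_2))'$ by $C_3\norm{v_1}_\infty\norm{v_2}_\infty$) controls the derivative — so by \eqref{eq2.4} applied to $g$, $\norm{(I-\pi_n)g}_\infty \le C_1\norm{g'}_\infty h = O(h^{2r+1})$, still one power short. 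To get the last power I would follow the mechanism behind \eqref{eq:2.14}, i.e. the argument of Theorem 3.1 of Kulkarni \cite{kul} / Lemma 9 of Chatelin–Lebbar \cite{CL}: since the kernel $q_*(s,\cdot)$ is piecewise $C^r$ across $t=t_i$ and $(I-\pi_n)\varphi$ is orthogonal to constants on each $\Delta_j$, the subinterval integrals $\int_{\Delta_j} q_*(t_i,t)\,[(I-\pi_n)\varphi(t)]^2\,dt$ can be compared, via a local Taylor expansion of $q_*$ and of $\varphi$ on $\Delta_j$, to the quantity one actually controls, producing the extra $h$; summing over $j$ keeps the bound $O(h^{2r+2})$. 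The analogous orthogonality/smoothness argument disposes of the mixed summand $\mathcal{M}(I-\pi_n)\K''(\varphi)(\pi_n(x_n^S-\varphi),(I-\pi_n)\varphi)(t_i)$, using in addition $\norm{x_n^S-\varphi}_\infty = O(h^{r+2})$ for $r\ge 2$ (resp. $O(h^2)$ for $r=1$, as in \eqref{eq:2.23}). Collecting the three estimates gives the claimed bound $O(h^{2r+2})$ uniformly in $i$.
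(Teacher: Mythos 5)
There is a genuine gap in your treatment of the summand you yourself single out as the crux, namely $\mathcal{M}(I - \pi_{n}) \K''(\varphi)\left( (I-\pi_n)\varphi \right)^2(t_i)$. Your proposed source of the final power of $h$ --- orthogonality of $(I-\pi_n)\varphi$ to constants on each $\Delta_j$, exploited inside the integrals $\int_{\Delta_j} q_{*}(t_i,t)\,[(I-\pi_n)\varphi(t)]^2\,dt$ --- cannot work: the integrand contains the \emph{square} of $(I-\pi_n)\varphi$, which is nonnegative and carries no orthogonality, and these subinterval integrals sum precisely to the genuine leading term of order $h^{2r}$ that produces $\mathcal{V}_1(\varphi)$ in \eqref{asy_exp2}. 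So the quantity you propose to make $O(h^{2r+2})$ by cancellation is in fact of exact order $h^{2r}$; the extra decay must come from the outer operator $\mathcal{M}(I-\pi_{n})$ evaluated at the partition point, not from the inner integral. The missing idea is the paper's duality step: since $\pi_n$ is the orthogonal projection, $I-\pi_n$ is self-adjoint in $L^2[0,1]$, so for any $w$,
\begin{equation*}
\mathcal{M}(I-\pi_{n})w(t_i) \;=\; \left\langle m_{t_i}\, ,\, (I-\pi_{n})w \right\rangle \;=\; \left\langle (I-\pi_{n})m_{t_i}\, ,\, (I-\pi_{n})w \right\rangle ,
\end{equation*}
and because $t_i$ is a partition point, the Green's-type kernel section $m_{t_i}$ is $C^r$ on each $\Delta_j$, whence $\norm{(I-\pi_{n,j})m_{t_i}}_{\Delta_j,\infty}=O(h^r)$ by \eqref{eq:2.4}. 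This supplies a factor $O(h^r)$ on top of whatever bound you have on $\norm{(I-\pi_{n})w}_\infty$: your own estimate $O(h^{2r+1})$ for the delicate summand (via Lemma \ref{lem:2.1} and \eqref{eq2.4}) then gives $O(h^{3r+1})$, which closes the case $r=1$, while for $r\ge 2$ even the crude bound $O(h^{2r})$ suffices since $3r\ge 2r+2$. You placed the partition-point information on the wrong kernel: what matters is the piecewise smoothness of $m(t_i,\cdot)$ paired with $(I-\pi_n)$ through self-adjointness, not that of $q_*(t_i,\cdot)$, which never meets a projection in the $t$ variable.

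A secondary remark: the paper avoids your decompositions altogether. It keeps the second-order integral remainder $\mathcal{R}_2(x_n^G,\varphi)=\int_0^1 \K''(\varphi+\theta(x_n^G-\varphi))(x_n^G-\varphi)^2(1-\theta)\,d\theta$ as a single block, bounds it by $O(h^{2r})$ in sup norm, upgrades $(I-\pi_n)\mathcal{R}_2$ to $O(h^{3})$ for $r=1$ via the derivative bound of Lemma \ref{lem:2.1}, and applies the duality step once. Your fourth-order Taylor expansion and the further splitting via \eqref{rel:1} are not incorrect, but they are unnecessary here and do not by themselves recover the missing power of $h$.
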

\begin{proof}
By the generalized Taylor's theorem we obtain
\begin{equation*}
	\K(x_n^G) - \K(\varphi) - \K'(\varphi)(x_n^G - \varphi) = 	\int_0^1 \mathcal{K}''\left(\varphi + \theta (x_n^G - \varphi) \right) (x_n^G - \varphi)^2
	(1 - \theta) \; d \theta.
\end{equation*}
Let
\begin{equation}\label{eq:3.4}
	\mathcal{R}_2(x_n^G, \varphi) = \int_0^1 \mathcal{K}''\left(\varphi + \theta (x_n^G - \varphi) \right) (x_n^G - \varphi)^2
	(1 - \theta) \; d \theta.
\end{equation}
Note that
\begin{align*}
	\mathcal{K}'' \left(\varphi + \theta (x_n^G - \varphi) \right) (x_n^G - \varphi)^2 (s) = \int_{0}^{1} \frac{\partial^2 \kappa}{\partial u^2} \left(s, t, \varphi(t) + \theta (x_n^G - \varphi)(t) \right)(x_n^G - \varphi)^2 (t) \: dt.
\end{align*}
 It follows that
\begin{eqnarray}\nonumber
	\norm{\mathcal{K}'' \left(\varphi + \theta (x_n^G - \varphi) \right) (x_n^G - \varphi)^2}_\infty \leq C_3 \norm{x_n^G - \varphi}_\infty^2.
\end{eqnarray}
Since $\displaystyle{\norm{x_n^G - \varphi}_\infty =  O\left( h^{r} \right)}$,
\begin{eqnarray}\label{eq:3.5}
	\norm{\mathcal{K}'' \left(\varphi + \theta (x_n^G - \varphi) \right) (x_n^G - \varphi)^2}_\infty = O\left( h^{2r} \right). 
\end{eqnarray}
Let $s \in [0, 1]$ be fixed and $m_{s}(t) = m(s, t) , ~  t \in [0, 1]$, then
\begin{multline*}
	\mathcal{M}(I - \pi_{n}) \left[  \K(x_n^G) - \K(\varphi) - \K'(\varphi)(x_n^G - \varphi)  \right](s) \\
	= \left< m_s, \: (I - \pi_{n}) \left[  \K(x_n^G) - \K(\varphi) - \K'(\varphi)(x_n^G - \varphi)  \right] \right>, 
\end{multline*}
where $\left< \cdot, \cdot \right>$ is the usual inner product in $L^2[0, 1]$, i.e., $$\left< x, y \right> = \int_{0}^{1} x(t)\: y(t) \: dt, \quad x, y \in L^2[0, 1].$$
Since $I - \pi_{n}$ is self-adjoint, 
\begin{multline}\label{eq:3.7}
	\mathcal{M}(I - \pi_{n}) \left[  \K(x_n^G) - \K(\varphi) - \K'(\varphi)(x_n^G - \varphi)  \right](t_i) \\
	= \left< (I - \pi_{n}) m_{t_i}, \:   (I - \pi_{n}) \left[  \K(x_n^G) - \K(\varphi) - \K'(\varphi)(x_n^G - \varphi)  \right]   \right>.
\end{multline}
Note that $m_{t_i}$ is continuous on $[t_{j-1}, t_j]$ and $r$ times continuously differentiable on $(t_{j-1}, t_j)$ for all $j=1, 2, \dots, n$. Therefore by \eqref{eq:2.4}	
\begin{align}\label{eq3.8}
	\norm{(I - \pi_{n, j})m_{t_i}}_{\Delta_j, \infty} =  O\left( h^{r} \right).
\end{align}
Using \eqref{eq:2.3}, \eqref{eq:3.5}, \eqref{eq:3.7} and the above estimate, we obtain
\begin{equation}\label{eq:12}
	\mathcal{M}(I - \pi_{n}) \left[  \K(x_n^G) - \K(\varphi) - \K'(\varphi)(x_n^G - \varphi)  \right](t_i) = O\left( h^{3r} \right), \quad r \geq 2.
\end{equation}

Consider the case $\mathbf{r =1}$.

From \eqref{eq:3.4}, it is easy to see that if $\mathcal{K}'' \left(\varphi + \theta (x_n^G - \varphi) \right) (x_n^G - \varphi)^2$ is differentiable, then $\mathcal{R}_2(x_n^G, \varphi)$ is differentiable. Since $\varphi + \theta (x_n^G - \varphi) \in \mathcal{B}(\varphi, \delta)$, from Lemma \ref{lem:2.1} we have $\mathcal{R}_2(x_n^G, \varphi)$ is differentiable. Thus, from \eqref{eq:3.4}
\begin{equation*}
	\left( \mathcal{R}_2(x_n^G, \varphi) \right)^{'} = \int_0^1 \left( \mathcal{K}''\left(\varphi + \theta (x_n^G - \varphi) \right) (x_n^G - \varphi)^2 \right)^{'} 
	(1 - \theta) \; d \theta.
\end{equation*}
This implies,
\begin{equation*}
	\norm{\left( \mathcal{R}_2(x_n^G, \varphi) \right)^{'}}_\infty \leq \frac{1}{2} \norm{\left( \mathcal{K}''\left(\varphi + \theta (x_n^G - \varphi) \right) (x_n^G - \varphi)^2 \right)^{'}}_\infty, \qquad 0 < \theta < 1.
\end{equation*}
Using Lemma \ref{lem:2.1}, 
\begin{equation*}
	\norm{\left( \mathcal{R}_2(x_n^G, \varphi) \right)^{'}}_\infty \leq  C_3 \norm{x_n^G - \varphi}^2.
\end{equation*}
From \eqref{eq2.4}, it follows that
\begin{align*}
	\norm{(I - \pi_{n}) \left[  \K(x_n^G) - \K(\varphi) - \K'(\varphi)(x_n^G - \varphi)  \right]}_\infty  & ~ = ~ \norm{(I - \pi_{n}) \mathcal{R}_2(x_n^G, \varphi)}_\infty \\ 
	& ~ \leq ~ C_1 C_3 \norm{x_n^G - \varphi}^2 h.
\end{align*}
By \eqref{eq1.5}, \eqref{eq:3.7}, \eqref{eq3.8} and the above estimate, we obtain
\begin{equation}\label{eq:3.8}
 \mathcal{M}(I - \pi_{n}) \left[  \K(x_n^G) - \K(\varphi) - \K'(\varphi)(x_n^G - \varphi)  \right](t_i) = 	O\left( h^{4} \right), \quad r = 1.
\end{equation}
Hence, the required result follows from \eqref{eq:12} and \eqref{eq:3.8}.	
\end{proof}

 Next we investigate the third term on the R.H.S. of the equation \eqref{eq:2.6}. 

\begin{proposition}\label{prop:3}
Let $\displaystyle{ \left\{t_i : i = 0, 1, \dots, n \right\}}$ be the set of all partition points defined by \eqref{partition_points}. Then $$   \mathcal{M}(I - \pi_{n}) \K'(\varphi)(x_n^G - \varphi)(t_i)  = O\left( h^{2r+2} \right), \quad \text{ for } r \geq 1 . $$	
\end{proposition}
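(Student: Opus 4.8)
The plan is to reduce the term $\mathcal{M}(I-\pi_n)\K'(\varphi)(x_n^G-\varphi)(t_i)$ to quantities whose orders are already established, by exploiting the self-adjointness of $I-\pi_n$ in $L^2[0,1]$ exactly as in the proof of Proposition \ref{prop:2}. Writing $m_{t_i}(t) = m(t_i,t)$, we have
\begin{align*}
\mathcal{M}(I-\pi_n)\K'(\varphi)(x_n^G-\varphi)(t_i) = \left\langle (I-\pi_n)m_{t_i},\; (I-\pi_n)\K'(\varphi)(x_n^G-\varphi)\right\rangle .
\end{align*}
First I would substitute the relation \eqref{rel:1}, $x_n^G-\varphi = \pi_n(x_n^S-\varphi) - (I-\pi_n)\varphi$, inside $\K'(\varphi)(\cdot)$, so that the right factor becomes
\begin{align*}
(I-\pi_n)\K'(\varphi)\pi_n(x_n^S-\varphi) \;-\; (I-\pi_n)\K'(\varphi)(I-\pi_n)\varphi .
\end{align*}
The second piece is $O(h^{r+2})$ by \eqref{eq:2.14}. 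Since $m_{t_i}$ is continuous on each $\Delta_j$ and $C^r$ on its interior, \eqref{eq3.8} gives $\|(I-\pi_{n,j})m_{t_i}\|_{\Delta_j,\infty} = O(h^r)$, so pairing this against the $O(h^{r+2})$ term (and using the uniform bound \eqref{eq:2.3} to control the sum over the $n = 1/h$ subintervals) already contributes $O(h^{2r+2})$, which is acceptable.

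For the remaining piece $\langle (I-\pi_n)m_{t_i},\, (I-\pi_n)\K'(\varphi)\pi_n(x_n^S-\varphi)\rangle$, I would move one copy of $I-\pi_n$ back by self-adjointness and idempotency, rewriting it as $\langle m_{t_i},\, (I-\pi_n)\K'(\varphi)\pi_n(x_n^S-\varphi)\rangle$, and then estimate $\|(I-\pi_n)\K'(\varphi)\pi_n(x_n^S-\varphi)\|_\infty$. The natural tool here is \eqref{eq:2.12}, the estimate $\|(I-\pi_n)(\K'(\varphi)v)\|_\infty \le C_1C_4\|v\|_\infty h$ coming from Lemma \ref{lem:2.1}-type differentiability of $\K'(\varphi)v$, applied with $v = \pi_n(x_n^S-\varphi)$; combined with $\|x_n^S-\varphi\|_\infty = O(h^{r+2})$ for $r\ge 2$ (resp. $O(h^2)$ for $r=1$) from \eqref{eq1.5}--\eqref{eq:1.5} and the uniform boundedness of $\pi_n$, this gives $O(h^{r+3})$ for $r\ge 2$, which is again $\ge h^{2r+2}$ only when $r\le 1$ — so a cruder bound is not enough for large $r$ and I must instead pair the $O(h^r)$ localized bound on $(I-\pi_{n,j})m_{t_i}$ against an $O(h^{r+2})$-type bound on $(I-\pi_n)\K'(\varphi)\pi_n(x_n^S-\varphi)$ subinterval by subinterval. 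This is the delicate point: I expect to need the analogue of \eqref{eq:2.14} with $\varphi$ replaced by $\pi_n(x_n^S-\varphi)$, i.e. $\|(I-\pi_n)\K'(\varphi)\pi_n w\|_\infty = O(h^{r+2}\|w\|_{\text{appropriate}})$, which should follow from Kulkarni's \cite{kul} Theorem 3.1 together with the fact that $\pi_n(x_n^S-\varphi)$ is a piecewise polynomial of degree $\le r-1$; pairing with $\|(I-\pi_{n,j})m_{t_i}\|_{\Delta_j,\infty} = O(h^r)$ then yields $O(h^{2r+2})\cdot\|x_n^S-\varphi\|_\infty$-type control, and since $\|x_n^S-\varphi\|_\infty$ is bounded this is $O(h^{2r+2})$.

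The main obstacle, then, is precisely this cancellation: a naive $\|(I-\pi_n)m_{t_i}\|_\infty \le O(h^r)$ globally combined with $O(h^{r+2})$ on the other factor would only give $O(h^{2r+2})$ after summing $n$ subintervals if one is careful, but the constant and the summation must be handled exactly as in Proposition \ref{prop:2} — using $\|(I-\pi_{n,j})m_{t_i}\|_{\Delta_j,\infty} = O(h^r)$ on each of the $n$ pieces, the factor $h^r$ from the other term on each piece, and then $n\cdot h^{2r}\cdot(\text{length }h) = h^{2r+1}$ improved by one more order via the extra smoothness built into \eqref{eq:2.14}. Once the bookkeeping of orders is set up as above, the case $r=1$ reduces to \eqref{eq1.5} plus \eqref{eq:2.12} directly, and the case $r\ge 2$ follows from \eqref{eq:2.14}, \eqref{eq:2.12} and \eqref{eq3.8}; in all cases we arrive at $\mathcal{M}(I-\pi_n)\K'(\varphi)(x_n^G-\varphi)(t_i) = O(h^{2r+2})$, completing the proof.
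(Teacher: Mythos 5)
Your overall strategy is the paper's: use self-adjointness of $I-\pi_n$ to write the term as $\left\langle (I-\pi_n)m_{t_i},\,(I-\pi_n)\K'(\varphi)(x_n^G-\varphi)\right\rangle$, split $x_n^G-\varphi$ via \eqref{rel:1}, dispose of the piece $(I-\pi_n)\K'(\varphi)(I-\pi_n)\varphi$ with \eqref{eq:2.14}, and treat $r=1$ separately with \eqref{eq:2.12} and \eqref{eq1.5}. Those parts are correct and coincide with the paper's proof.

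The gap is in the remaining piece $\left\langle (I-\pi_n)m_{t_i},\,(I-\pi_n)\K'(\varphi)\pi_n(x_n^S-\varphi)\right\rangle$ for $r\ge 2$. You first discard the factor $(I-\pi_n)m_{t_i}$ (pairing against $m_{t_i}$ alone), find that \eqref{eq:2.12} then yields only $O(h^{r+3})$, and to recover you posit the operator-norm estimate $\norm{(I-\pi_n)\K'(\varphi)\pi_n w}_\infty = O\left(h^{r+2}\right)\norm{w}$ as an ``analogue of \eqref{eq:2.14}.'' That estimate is established nowhere in the paper and should not be expected to hold: \eqref{eq:2.14} relies essentially on the smoothness of the fixed function $\varphi$, whereas for a general $w$ the function $\K'(\varphi)\pi_n w$ is only $C^1$ (the derivatives of $\ell_*$ jump across $s=t$), so \eqref{eq:2.12} gives a gain of $h$, not $h^{r+2}$. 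The point you are missing is that no such operator estimate is needed: the factor $h^{r+2}$ comes from the argument, not from the operator, since $\norm{\pi_n(x_n^S-\varphi)}_\infty=O\left(h^{r+2}\right)$ for $r\ge2$ by \eqref{eq:1.5} and \eqref{eq:2.3}; hence $\norm{(I-\pi_n)\K'(\varphi)\pi_n(x_n^S-\varphi)}_\infty=O\left(h^{r+2}\right)$ by boundedness alone, and pairing this with $\norm{(I-\pi_{n,j})m_{t_i}}_{\Delta_j,\infty}=O\left(h^r\right)$ from \eqref{eq14} over the inner product (the subinterval lengths sum to $1$, so no factor of $n$ is lost) gives $O\left(h^{2r+2}\right)$. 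This is exactly what the paper does --- it in fact bounds $\norm{\K'(\varphi)(x_n^G-\varphi)}_\infty=O\left(h^{r+2}\right)$ for $r\ge2$ directly, without splitting inside $I-\pi_n$ at all. Your closing order count ``$n\cdot h^{2r}\cdot h=h^{2r+1}$ improved by one more order'' is likewise off: the correct product of orders is $h^{r}\cdot h^{r+2}$, and no extra improvement step is required.
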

\begin{proof}
	From \eqref{emm} we have
	\begin{align}\label{eq13}
	\mathcal{M}(I - \pi_{n}) \K'(\varphi)(x_n^G - \varphi)(t_i)  & = \int_0^1  m(t_i, t) ~ (I - \pi_{n}) \K'(\varphi)(x_n^G - \varphi)(t) ~ d t \notag\\
	& = \left< m_{t_i} ~,~ (I - \pi_{n}) \K'(\varphi)(x_n^G - \varphi) \right> \notag\\
	& = \left< (I - \pi_{n})m_{t_i} ~, ~ (I - \pi_{n})\K'(\varphi)(x_n^G - \varphi) \right>.
	\end{align} 
It is easy to see that, $m_{t_i}$ is continuous on $[t_{j-1}, t_j]$ and $r$ times continuously differentiable on $(t_{j-1}, t_j)$ for all $j=1, 2, \dots, n$. Therefore by \eqref{eq:2.4}	
\begin{align}\label{eq14}
\norm{(I - \pi_{n, j})m_{t_i}}_{\Delta_j, \infty} =  O\left( h^{r} \right).
\end{align}
Note that 
\begin{equation}\label{eq3.9}
	 \K'(\varphi)(x_n^G - \varphi) =  \K'(\varphi)\left( \pi_{n}(x_n^S - \varphi) \right) -   \K'(\varphi)(I - \pi_{n})\varphi.
\end{equation}
Thus, by \eqref{eq:1.5}, \eqref{eq:2.3} and \eqref{eq12} we obtain, $$ \norm{ \K'(\varphi)(x_n^G - \varphi)}_\infty = O\left( h^{r+2} \right), \quad r \geq 2. $$
Hence, from \eqref{eq13}, \eqref{eq14} and the above estimate, we obtain
\begin{equation}\label{eq:3.10}
	 \mathcal{M}(I - \pi_{n}) \K'(\varphi)(x_n^G - \varphi)(t_i)  = O\left( h^{2r+2} \right) , \quad r \geq 2.
\end{equation}

When $\mathbf{r =1}$, that is, when $\mathscr{X}_n$ is the space of piecewise constant functions, it is easy to see from \eqref{eq1.5}, \eqref{eq12} and \eqref{eq3.9} that
$$ \norm{ \K'(\varphi)(x_n^G - \varphi)}_\infty = O\left( h^{2} \right),$$ which is not equal to $O\left( h^{2r+2} \right)$ with $r=1$. We consider this case separately.

Note that,  
\begin{equation}\label{eq:3.11}
	(I - \pi_{n}) \K'(\varphi)(x_n^G - \varphi) = (I - \pi_{n}) \K'(\varphi)\left( \pi_{n}(x_n^S - \varphi) \right) -  (I - \pi_{n}) \K'(\varphi)(I - \pi_{n})\varphi.
\end{equation}
From \eqref{eq:2.12}, we have
\begin{equation*}
	\norm{(I - \pi_{n}) \K'(\varphi)\left( \pi_{n}(x_n^S - \varphi) \right)}_\infty \leq C_1 C_4 \norm{\pi_n} \norm{x_n^S - \varphi}_\infty h.
\end{equation*}
From \eqref{eq1.5} and \eqref{eq:2.3}, it follows that
\begin{equation*}
	\norm{(I - \pi_{n}) \K'(\varphi)\left( \pi_{n}(x_n^S - \varphi) \right)}_\infty = O\left( h^{3} \right).
\end{equation*}
By \eqref{eq:2.14}, \eqref{eq13}, \eqref{eq14}, \eqref{eq:3.11} and the above estimate, we obtain
\begin{equation}\label{eq:3.12}
	\mathcal{M}(I - \pi_{n}) \K'(\varphi)(x_n^G - \varphi)(t_i)  = O\left( h^{4} \right) , \quad r = 1.
\end{equation}
Hence, the required result follows from \eqref{eq:3.10} and \eqref{eq:3.12}.
\end{proof}

\noindent
Now, we prove our main theorem.

\begin{theorem}\label{thm:1} Let $f \in C^{2r}[0, 1]$, and the kernel of the Urysohn integral operator \eqref{eq:1.1} be of class $\mathscr{G}_4(r, 0)$. Let $\varphi$ be a fixed point of the operator $\mathcal{T}$ defined by \eqref{new_op}, with $1$ not an eigenvalue of $\mathcal{K}'(\varphi)$. For $r \geq 1$, let $\mathscr{X}_n$ be the space of piecewise polynomials of degree $ \leq r-1 $ with respect to the partition \eqref{eq:2.2} and $\pi_{n}$ be the orthogonal projection defined by \eqref{eq:2.3}--\eqref{eq2.4}. Let $x_n^S$ be the iterated Galerkin solution defined by \eqref{eq:It_Gal}. Then, for	$i = 0, 1, \ldots, n$,
	\begin{equation*}
		x_n^S(t_i) - \varphi(t_i) = - \zeta_{2r}(t_i) \: h^{2r} + O\left( h^{2r+2} \right), 
	\end{equation*}
where $\zeta_{2r}$ is a function bounded by a constant independent of $h$.
\end{theorem}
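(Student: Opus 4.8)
The proof is an assembly of the preceding results, so the plan is short. I would start from the Atkinson–Graham–Sloan error representation \eqref{eq:2.6} for the iterated Galerkin solution, evaluate it at a partition point $s = t_i$, and handle the four terms on the right-hand side one at a time:
\begin{align*}
x_n^S(t_i) - \varphi(t_i) &= \left( I - \K'(\varphi) \right)^{-1}\left[  \K(x_n^G) - \K(\varphi) - \K'(\varphi)(x_n^G - \varphi)  \right](t_i)\\
&\quad - \mathcal{M}(I - \pi_{n})\left[  \K(x_n^G) - \K(\varphi) - \K'(\varphi)(x_n^G - \varphi)  \right](t_i)\\
&\quad - \mathcal{M}(I - \pi_{n}) \K'(\varphi)(x_n^G - \varphi)(t_i)\\
&\quad - \mathcal{M}(I - \pi_{n})\varphi(t_i).
\end{align*}
By Proposition \ref{prop:1} the first term equals $\mathcal{V}_1(\varphi)(t_i)\,h^{2r} + O(h^{2r+2})$; by Propositions \ref{prop:2} and \ref{prop:3} the second and third terms are each $O(h^{2r+2})$, with implied constants uniform in $i$.

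For the last term I would use the quoted expansion \eqref{asy_exp1}. Writing $\mathcal{M}(I-\pi_n)\varphi(t_i) = \mathcal{M}\varphi(t_i) - \mathcal{M}\pi_n\varphi(t_i)$, equation \eqref{asy_exp1} gives $\mathcal{M}(I-\pi_n)\varphi(t_i) = (\mathcal{A}_{2r}\varphi)(t_i)\,h^{2r} + O(h^{2r+2})$. Collecting the four contributions then yields
$$ x_n^S(t_i) - \varphi(t_i) = \bigl[\,\mathcal{V}_1(\varphi)(t_i) - (\mathcal{A}_{2r}\varphi)(t_i)\,\bigr] h^{2r} + O(h^{2r+2}), $$
and the theorem follows with $\zeta_{2r} := \mathcal{A}_{2r}\varphi - \mathcal{V}_1(\varphi)$. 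Since $\mathcal{A}_{2r}\varphi$ (explicit in \eqref{asy_exp1}) and $\mathcal{V}_1(\varphi) = \left(\int_0^1 [\chi_r(t)]^2\,dt\right)\mathcal{M}_2(\varphi^{(r)})^2$ are fixed functions on $[0,1]$ that do not depend on $h$, the function $\zeta_{2r}$ is bounded by a constant independent of $h$, as claimed.

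There is no real obstacle remaining at this stage: the genuine analytic work has already been carried out in Lemmas \ref{lem:2.1}, \ref{lem:1}, \ref{lem:2} and Propositions \ref{prop:1}--\ref{prop:3}. The only points to attend to are (i) confirming that the $O(h^{2r+2})$ bounds in Propositions \ref{prop:2} and \ref{prop:3} are uniform over the $n+1$ partition points $t_i$ — which holds because the implied constants depend only on $\varphi$ and on $\|m\|$-type quantities, not on $i$ — and (ii) using \eqref{asy_exp1} for the fourth term rather than re-deriving its expansion. One should also note at the outset that the hypothesis $f \in C^{2r}[0,1]$, together with the kernel being of class $\mathscr{G}_4(r,0)$ and $1$ not an eigenvalue of $\K'(\varphi)$, secures via Corollary 3.2 of Atkinson–Potra the regularity of $\varphi$ used in the expansions of Section 2.3.
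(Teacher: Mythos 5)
Your proof is correct and follows essentially the same route as the paper: evaluate the error representation \eqref{eq:2.6} at $t_i$ and combine Proposition \ref{prop:1}, Proposition \ref{prop:2}, Proposition \ref{prop:3} and the expansion \eqref{asy_exp1} for the four terms. The only divergence is in the bookkeeping of $\zeta_{2r}$: you obtain $\zeta_{2r} = \mathcal{A}_{2r}\varphi - \mathcal{V}_1(\varphi)$ while the paper writes $\zeta_{2r} = \mathcal{A}_{2r} + \mathcal{V}_1(\varphi)$, and your sign on $\mathcal{V}_1(\varphi)$ is the one actually consistent with the $+\mathcal{V}_1(\varphi)(s)\,h^{2r}$ conclusion of Proposition \ref{prop:1} (the difference is immaterial to the statement, which only asserts boundedness of $\zeta_{2r}$).
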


\begin{proof}
	From the equation \eqref{eq:2.6}
	\begin{align*}
		x_n^S(t_i) - \varphi(t_i) & ~=~ \left( I - \K'(\varphi) \right)^{-1}  \left[  \K(x_n^G) - \K(\varphi) - \K'(\varphi)(x_n^G - \varphi)  \right](t_i) \notag \\
		& ~~~ - \mathcal{M}(I - \pi_{n}) \left[  \K(x_n^G) - \K(\varphi) - \K'(\varphi)(x_n^G - \varphi)  \right](t_i) \notag\\
		& ~~~ - \mathcal{M}(I - \pi_{n}) \K'(\varphi)(x_n^G - \varphi)(t_i) \notag\\
		& ~~~ - \mathcal{M}(I - \pi_{n})\varphi(t_i).
	\end{align*}

Let \begin{align*}
	\zeta_{2r} = \mathcal{A}_{2r} + \mathcal{V}_1(\varphi).
\end{align*}
	Hence, the proof of this theorem follows from the equation \eqref{asy_exp1}, Proposition \ref{prop:1}, Proposition \ref{prop:2} and Proposition \ref{prop:3}.
	
\end{proof}

We can now apply one step of Richardson extrapolation and obtain an approximations of $\varphi$ of order $h^{2r+2}$ at the partition points.

Define
\begin{equation*}
	 x_n^{EX} = \frac { 2^{ 4 r } x_{2 n}^S -  x_n^S} { 2^{ 4 r } - 1}.
\end{equation*}
Then under the assumptions of Theorem \ref{thm:1}, we have the following result
\begin{equation}  \label{eq:3.9}
	x_n^{EX} {(t_i)} - \varphi{(t_i)}   = O \left( h^{ 2r +2} \right), \quad i = 0, 1, 2, \ldots, n.
\end{equation}


\section{Numerical Illustration}

For the sake of numerical illustration, we consider the following example of a non-linear Hammerstein integral equation from Kulkarni-Rane \cite{Rpk-Aks}. 
\noindent
Consider
\begin{equation}\label{eq:4.1}
	\varphi (s) - \int_0^1 \kappa (s, t) \left[ \psi  \left(t, \varphi (t) \right) \right] \: dt  = f(s), \;\;\; 0 \leq s \leq 1,
\end{equation}
where 
$$
\kappa (s,t) =\frac{1}{\gamma \sinh \gamma} \left\{ {\begin{array}{ll}
		\sinh \gamma s \: \sinh \gamma(1-t), & ~ 0 \leq t \leq s \leq 1, \\
		\gamma (1-s) \sinh \gamma t, & ~ 0 \leq s \leq t \leq 1,
\end{array}}\right. 
$$
with $\gamma = \sqrt{12},$
and $$  \psi(t, \varphi(t))= \gamma^2 \varphi(t) - 2 \left( \varphi(t) \right)^3, \quad t \in [0,1].$$
We have $f(s) =\frac{1}{\gamma \sinh \gamma} \left \lbrace 2 \sinh \gamma(1-s) + \frac{2}{3} \sinh \gamma s \right \rbrace. $ The exact solution of \eqref{eq:4.1} is given by 
\begin{align*}
	\varphi(s) =\frac{2}{2s+1}, \quad s \in [0, 1].
\end{align*}

 Let $\mathscr{X}_n$ be the space of piecewise constant polynomials with respect to the uniform partition \eqref{eq:2.2} of the interval $[0,1]$ considered before. Let $\pi_n: L^\infty[0,1] \rightarrow \mathscr{X}_n$ be the orthogonal projection defined by \eqref{eq:2.3}--\eqref{eq2.4}.\\ In this case, it is given by $$ (\pi_n \varphi)(s) =\displaystyle \frac{1}{h} \int_{(i-1)h}^{ih} \varphi(t) \: dt,\; s \in [(i-1)h,ih],$$ where $h = \frac{1}{n}$. In the definition of projection operator defined above, if we replace the integral by the right hand rule, then 
$ (\pi_n \varphi)(ih-)= \varphi(ih-).$ Let $x_n^S$ be the Sloan solution defined by \eqref{eq:It_Gal}. Then $x_n^G(ih)=\displaystyle  \frac{\pi_n x_n^S(ih-)+\pi_n x_n^S(ih+)}{2}$ at the partition points is obtained by solving the approximate system of non-linear equations which gives the values of $\pi_n x_n^S(ih-)$ and $\pi_n x_n^S(ih+).$ The system is as follows:
\begin{equation}
\alpha_j = h \sum_{l=1}^n \kappa(s_j,s_l) \left[ \gamma^2 \alpha_l - 2\frac{\alpha_l^3}{h} \right] + \frac{f(s_j)}{\sqrt{h}}, \quad j=1, 2, \ldots, n,
\end{equation}
where $\alpha_l= \pi_n x_n^G(s_l)$ and $s_l = (l-\frac{1}{2})h$ for $l = 1, 2, \ldots, n$. The above system is obtained by replacing all the integrals by numerical integration formula.

We have used Picard's iteration to solve the above system of non-linear equations.

Let $t_i=(i-1)/20, i=1,2,\ldots,21$ be the partition points with step size $h=\frac{1}{20}.$
It is easy to see that 
\begin{align*}
	E_1^n(t_i) =|\varphi(t_i) - x_n^S(t_i)| = O\left( h^2 \right).
\end{align*}
We define 
\begin{align*}
	x_n^{EX}(t_i) = \frac{4 x_{2n}^S(t_i) -x_n^S(t_i)}{3}.
\end{align*}
Then $$ E_2^n(t_i) = \left|\varphi(t_i) - x_n^{EX}(t_i) \right| = O\left( h^4 \right).$$
The orders of convergence are calculated using the formula :
\begin{align*}
\begin{aligned}
	  \alpha_1= \frac{log(E_1^n(t_i)/E_1^{2n}(t_i))}{log(2)}, \\ 
	 \beta=\frac{log(E_2^n(t_i)/E_2^{2n}(t_i))}{log(2)},  
\end{aligned} ~~~ ~~~~ n=40	
\end{align*}

\begin{align*}
	\alpha_2= \frac{log(E_1^n(t_i)/E_1^{2n}(t_i))}{log(2)}, \quad n=20.
\end{align*}

We expect $\alpha_1=\alpha_2=2$ and $\beta=4.$

\begin{center}
	Table 4.1
	
	\begin{tabular} {|c|c|c|c|c|c|}\hline
		$t_i$ & $E_1^n(t_i): n=20$ & ~~~ $E_1^n(t_i): n=40$ & $E_1^n(t_i): n=80$ & ~~~ $\alpha_{1}$ & ~~~ $\alpha_{2}$
		\\
		\hline
		$0.05$&  $  8.6 \times 10^{-3}	 $ &  $  2.15 \times 10^{-3} $ &  $  5.37 \times 10^{-4} $ & $2.00$ & $2.00$               \\
		$0.1$&  $  7.56 \times 10^{-3}	 $ &  $  1.89 \times 10^{-3} $ &  $  4.72 \times 10^{-4} $ & $2.00$ & $2.00$               \\
		$0.15$&  $  6.79 \times 10^{-3}	 $ &  $  1.7 \times 10^{-3} $ &  $  4.24 \times 10^{-4} $ & $2.00$ & $2.00$               \\
		$0.2$&  $  6.22 \times 10^{-3}	 $ &  $  1.55 \times 10^{-3} $ &  $  3.89 \times 10^{-4} $ & $2.00$ & $2.00$               \\
		$0.25$&  $  5.78 \times 10^{-3}	 $ &  $  1.44 \times 10^{-3} $ &  $  3.61 \times 10^{-4} $ & $2.00$ & $2.00$               \\
		$0.3$&  $  5.45 \times 10^{-3}	 $ &  $  1.36 \times 10^{-3} $ &  $  3.4 \times 10^{-4} $ & $2.00$ & $2.00$               \\
		$0.35$&  $  5.19 \times 10^{-3}	 $ &  $  1.3 \times 10^{-3} $ &  $  3.24 \times 10^{-4} $ & $2.00$ & $2.00$               \\
		$0.4$&  $  4.98 \times 10^{-3}	 $ &  $  1.25 \times 10^{-3} $ &  $  3.11 \times 10^{-4} $ & $2.00$ & $2.00$               \\
		$0.45$&  $  4.82 \times 10^{-3}	 $ &  $  1.2 \times 10^{-3} $ &  $  3.01\times 10^{-4} $ & $2.00$ & $2.00$               \\
		$0.5$&  $  4.68 \times 10^{-3}	 $ &  $  1.17 \times 10^{-3} $ &  $  2.92 \times 10^{-4} $ & $2.00$ & $2.00$               \\
		$0.55$&  $  4.55 \times 10^{-3}	 $ &  $  1.14 \times 10^{-3} $ &  $  2.84 \times 10^{-4} $ & $2.00$ & $2.00$               \\
		$0.6$&  $  4.44 \times 10^{-3}	 $ &  $  1.11 \times 10^{-3} $ &  $  2.77 \times 10^{-4} $ & $2.00$ & $2.00$               \\
		$0.65$&  $  4.33 \times 10^{-3}	 $ &  $  1.08 \times 10^{-3} $ &  $  2.7 \times 10^{-4} $ & $2.00$ & $2.00$               \\
		$0.7$&  $  4.22 \times 10^{-3}	 $ &  $  1.05 \times 10^{-3} $ &  $  2.64 \times 10^{-4} $ & $2.00$ & $2.00$               \\
		$0.75$&  $  4.10 \times 10^{-3}	 $ &  $  1.02 \times 10^{-3} $ &  $  2.56 \times 10^{-4} $ & $2.00$ & $2.00$               \\
		$0.8$&  $  3.98 \times 10^{-3}	 $ &  $  9.94 \times 10^{-4} $ &  $  2.48 \times 10^{-4} $ & $2.00$ & $2.00$               \\
		$0.85$&  $  3.84 \times 10^{-3}	 $ &  $  9.6 \times 10^{-4} $ &  $  2.4 \times 10^{-4} $ & $2.00$ & $2.00$               \\
		$0.9$&  $  3.69 \times 10^{-3}	 $ &  $  9.22 \times 10^{-4} $ &  $  2.3 \times 10^{-4} $ & $2.00$ & $2.00$               \\
		$0.95$&  $  3.52 \times 10^{-3}	 $ &  $  8.8 \times 10^{-4} $ &  $  2.2 \times 10^{-4} $ & $2.00$ & $2.00$               \\
		\hline
		
	\end{tabular}
\end{center}

\newpage
\begin{center}
	
	Table 4.2
	
	\begin{tabular} {|c|c|c|c|}\hline
		$t_i$ & $E_2^n(t_i): n=20$ & ~~~ $E_2^n(t_i): n=40$ & ~~~ $\beta$ \\
		\hline
		$0.05$&  $  2.98 \times 10^{-6}	 $ &  $  1.87 \times 10^{-7} $  &$3.99$ \\
		$0.1$&  $  2.23 \times 10^{-6}	 $ &  $  1.41 \times 10^{-7} $  &$3.99$ \\
		$0.15$&  $  1.59 \times 10^{-6}	 $ &  $  1.01 \times 10^{-7} $  &$3.99$ \\
		$0.2$&  $  1.09 \times 10^{-6}	 $ &  $  6.94 \times 10^{-8} $  &$3.97$ \\
		$0.25$&  $  7.13 \times 10^{-7}	 $ &  $  4.58 \times 10^{-8} $  &$3.96$ \\
		$0.3$&  $  4.46 \times 10^{-7}	 $ &  $  2.91 \times 10^{-8} $  &$3.94$ \\
		$0.35$&  $  2.7 \times 10^{-7}	 $ &  $  4.58 \times 10^{-8} $  &$3.91$ \\
		$0.4$&  $  1.69 \times 10^{-7}	 $ &  $  4.58 \times 10^{-8} $  &$3.86$ \\
		$0.45$&  $  1.3 \times 10^{-7}	 $ &  $  4.58 \times 10^{-8} $  &$3.83$ \\
		$0.5$&  $  1.41 \times 10^{-7}	 $ &  $  4.58 \times 10^{-8} $  &$3.85$ \\
		$0.55$&  $  1.91 \times 10^{-7}	 $ &  $  4.58 \times 10^{-8} $  &$3.89$ \\
		$0.6$&  $  2.72 \times 10^{-7}	 $ &  $  4.58 \times 10^{-8} $  &$3.93$ \\
		$0.65$&  $  3.75 \times 10^{-7}	 $ &  $  4.58 \times 10^{-8} $  &$3.95$ \\
		$0.7$&  $  4.95 \times 10^{-7}	 $ &  $  4.58 \times 10^{-8} $  &$3.97$ \\
		$0.75$&  $  6.26 \times 10^{-7}	 $ &  $  4.58 \times 10^{-8} $  &$3.98$ \\
		$0.8$&  $  7.6 \times 10^{-7}	 $ &  $  4.58 \times 10^{-8} $  &$3.99$ \\
		$0.85$&  $  8.94 \times 10^{-7}	 $ &  $  4.58 \times 10^{-8} $  &$3.99$ \\
		$0.9$&  $  1.02 \times 10^{-6}	 $ &  $  4.58 \times 10^{-8} $  &$3.99$ \\
		$0.95$&  $  1.14 \times 10^{-6}	 $ &  $  4.58 \times 10^{-8} $  &$4$ \\
		\hline
	\end{tabular}
\end{center}

\noindent
This verifies the result \eqref{eq:3.9}. \\

\noindent
\textbf{\large Acknowledgment}

The author Akshay S. Rane would like to thank UGC faculty recharge program, India for their support.



\begin{thebibliography}{999}
	\bibitem{Atk}
	K. E. Atkinson, The Numerical Solutions of Integral Equations of the 
	Second Kind, \textit{Cambridge University Press}, Cambridge, 1997.
	
	\bibitem{AGS} 
	K. E. Atkinson, I. Graham and I. H. Sloan, Piecewise continuous collocation for
	integral equations, \textit{SIAM J. Numer. Anal.} \textbf{20} (1983), 172--186.
	
	
	\bibitem{Atk-Pot}
	K.E. Atkinson and F.A. Potra, Projection and iterated projection methods for nonlinear integral equations, \textit{SIAM J. Numer. Anal.} \textbf{24} (1987), 1352--1373.
	
    \bibitem{CL}
	F. Chatelin and R. Lebbar, Superconvergence results for the iterated
	projection method applied to a Fredholm integral equation 
	of the second kind and the corresponding eigenvalue problem,
	\textit{J. Integral Equations}, 6 (1984), pp. 71-91.
	
			
	\bibitem{Ford}
	W. F. Ford, J. A. Pennline, Y. Xu and Y. Zhao, Asymptotic error analysis of
	a quadrature method for integral equations with Green's function kernels,
	\textit{J. Integr. Eq. Appl.} \textbf{12} (2000), 349--384.
	
	\bibitem{Kra}
	M. A. Krasnoselskii, 
	Topological Methods in the Theory of Nonlinear Integral Equations,
	\textit{Pergamon Press}, London,  1964.
	
	
	\bibitem{KraV}
	M. A. Krasnoselskii, G. M. Vainikko, P. P. Zabreiko, Ya. B. Rutitskii and V. Ya Stetsenko,
	Approximate Solution of   Operator Equations,
	\textit{P. Noordhoff}, Groningen, 1972.
	
	\bibitem{KraZ} M. A. Krasnoselskii and P. P. Zabreiko,
	Geometrical Methods of
	Nonlinear Analysis, \textit{Springer-Verlag}, Berlin, 1984.
	
	\bibitem{kul} R. P. Kulkarni, On improvement of the iterated Galerkin solution
	of the second kind integral equations,
	\textit{J. Numer. Math.} \textbf{13(3)} (2005), 205--218.
	
	\bibitem{RPK-NTJ}
	R. P. Kulkarni and T. J. Nidhin, 
	Asymptotic error analysis of projection and modified projection methods for nonlinear integral equations,
	\textit{J. Integr. Eq. Appl.} \textbf{27(1)} (2015), 67--101. 
	
	\bibitem{Rpk-Aks1}
	R. P. Kulkarni and A. S. Rane, 
	Asymptotic expansions for approximate solutions of Fredholm integral equations with Green's function type kernels,
	\textit{J. Integr. Eq. Appl.} \textbf{24} (2012), 39-79. 
	
	\bibitem{Rpk-Aks}
	R. P. Kulkarni and A. S. Rane, 
	Asymptotic expansions for approximate solutions of Hammerstein integral equations
	with Green's function Type Kernels,
	\textit{Mathematical Modelling and Analysis} \textbf{19(1)} (2014), 127--143.
	
	
	\bibitem{Linz}
	P. Linz, Theoretical numerical analysis, \textit{Courier Dover Publications}, 2019.
	
	\bibitem{McLean}
	W. McLean, Asymptotic error expansions  for numerical solution of integral 
	equations, \textit{IMA J. of Numer. Anal.} \textbf{9} (1989), 373--384.
	
	\bibitem{GR-AR}
	G. Rakshit and A. S. Rane, Asymptotic expansion of iterated Galerkin solution of second kind Fredholm integral equations with Green's kernels, \textit{J. Integr. Eq. Appl.} (2020), To appear.
	
	\bibitem{Rall}
	L. B. Rall, Computational solution of nonlinear operator equations, \textit{Wiley New York}, 1969.
	
	\bibitem{Riesz-Nagy}
	F. Riesz and B. S. Nagy, Functional Analysis, \textit{Frederick Ungar Pub.}, New York, 1955.
	

\end{thebibliography}
\end{document}